\newtheorem{theorem}{Theorem}[section]
\newtheorem{corollary}[theorem]{Corollary}
\newtheorem{fact}[theorem]{Fact}
\newtheorem{lemma}[theorem]{Lemma}
\newtheorem{proposition}[theorem]{Proposition}
\newtheorem{question}[theorem]{Question}
\newtheorem{claim}[theorem]{Claim}
\theoremstyle{definition}
\newtheorem{definition}[theorem]{Definition}
\DeclareMathOperator{\dom}{dom}
\DeclareMathOperator{\ran}{ran}
\DeclareMathOperator{\fin}{fin}
\DeclareMathOperator{\seq}{seq}
\DeclareMathOperator{\seqi}{seq^{1-1}}
\begin{document}


\baselineskip=17pt


\title[A choice-free cardinal equality]{A choice-free cardinal equality}

\author[Guozhen Shen]{Guozhen Shen}
\address{Institute of Mathematics\\
Academy of Mathematics and Systems Science\\
Chinese Academy of Sciences\\
Beijing 100190\\
People's Republic of China}
\address{School of Mathematical Sciences\\
University of Chinese Academy of Sciences\\
Beijing 100049\\
People's Republic of China}
\email{shen\_guozhen@outlook.com}

\date{}

\begin{abstract}
For a cardinal $\mathfrak{a}$, let $\fin(\mathfrak{a})$ be the cardinality
of the set of all finite subsets of a set which is of cardinality $\mathfrak{a}$.
It is proved without the aid of the axiom of choice that
for all infinite cardinals $\mathfrak{a}$ and all natural numbers $n$,
\[
2^{\fin(\mathfrak{a})^n}=2^{[\fin(\mathfrak{a})]^n}.
\]
On the other hand, it is proved that the following statement is consistent with $\mathsf{ZF}$:
there exists an infinite cardinal $\mathfrak{a}$ such that
\[
2^{\fin(\mathfrak{a})}<2^{\fin(\mathfrak{a})^2}<2^{\fin(\mathfrak{a})^3}<\dots<2^{\fin(\fin(\mathfrak{a}))}.
\]
\end{abstract}

\subjclass[2010]{Primary 03E10, 03E25}

\keywords{$\mathsf{ZF}$, cardinal, finite subsets, axiom of choice}

\maketitle

\section{Introduction}
For a cardinal $\mathfrak{a}$, let $\fin(\mathfrak{a})$ be the cardinality
of the set of all finite subsets of a set which is of cardinality $\mathfrak{a}$.
The axiom of choice implies that $\fin(\mathfrak{a})=\mathfrak{a}$ for any infinite cardinal $\mathfrak{a}$.
However, in the absence of the axiom of choice, this is no longer the case.
In fact, in the ordered Mostowski model (cf.~\cite[pp.~198--202]{Halbeisen2017}),
the cardinality $\mathfrak{a}$ of the set of atoms satisfies
\begin{multline}\label{s001}
\fin(\mathfrak{a})<[\fin(\mathfrak{a})]^2<\fin(\mathfrak{a})^2<[\fin(\mathfrak{a})]^3<\fin(\mathfrak{a})^3<\cdots\\
<\fin(\fin(\mathfrak{a}))<\fin(\fin(\fin(\mathfrak{a})))<\dots<\aleph_0\cdot\fin(\mathfrak{a}).
\end{multline}
It is natural to ask which relationships between the \emph{powers} of
the cardinals in \eqref{s001} for an arbitrary infinite cardinal $\mathfrak{a}$
can be proved without the aid of the axiom of choice.

The first result of this kind is L\"auchli's lemma (cf.~\cite{Lauchli1961} or \cite[Lemma~5.27]{Halbeisen2017}),
which states that for all infinite cardinals $\mathfrak{a}$,
\[
2^{\aleph_0\cdot\fin(\mathfrak{a})}=2^{\fin(\mathfrak{a})}.
\]
L\"auchli's lemma implies that, in the ordered Mostowski model,
the powers of the cardinals in \eqref{s001} are all equal,
where $\mathfrak{a}$ is the cardinality of the set of atoms.

In this paper, we give a complete answer to the above question.
We first prove in $\mathsf{ZF}$ that for all infinite cardinals $\mathfrak{a}$,
\[
2^{\fin(\fin(\mathfrak{a}))}=2^{\fin(\fin(\fin(\mathfrak{a})))}=2^{\fin(\fin(\fin(\fin(\mathfrak{a}))))}=\cdots.
\]
Then, as our main result, we prove in $\mathsf{ZF}$ that
for all infinite cardinals $\mathfrak{a}$ and all natural numbers $n$,
\[
2^{\fin(\mathfrak{a})^n}=2^{[\fin(\mathfrak{a})]^n}.
\]
Finally, we prove that the following statement is consistent with $\mathsf{ZF}$:
there exists an infinite cardinal $\mathfrak{a}$ such that
\[
2^{\fin(\mathfrak{a})}<2^{\fin(\mathfrak{a})^2}<2^{\fin(\mathfrak{a})^3}<\dots<2^{\fin(\fin(\mathfrak{a}))}.
\]

\section{Basic notions and facts}
Throughout this paper, we shall work in $\mathsf{ZF}$.
In this section, we indicate briefly our use of some terminology and notation.
The cardinality of $x$, which we denote by $|x|$, is the least ordinal $\alpha$ equinumerous to $x$,
if $x$ is well-orderable, and the set of all sets $y$ of least rank which are equinumerous to $x$, otherwise.
We shall use lower case German letters $\mathfrak{a},\mathfrak{b}$ for cardinals.

For a function $f$, we shall use $\dom(f)$ for the domain of $f$, $\ran(f)$ for the range of $f$,
$f[x]$ for the image of $x$ under $f$, $f^{-1}[x]$ for the inverse image of $x$ under $f$,
and $f{\upharpoonright}x$ for the restriction of $f$ to $x$.
For functions $f$ and $g$, we use $g\circ f$ for the composition of $g$ and $f$.

\begin{definition}
Let $x,y$ be arbitrary sets, let $\mathfrak{a}=|x|$, and let $\mathfrak{b}=|y|$.
\begin{enumerate}[leftmargin=*, widest=1]
\item $x\preccurlyeq y$ means that there exists an injection from $x$ into $y$;
      $\mathfrak{a}\leqslant\mathfrak{b}$ means that $x\preccurlyeq y$.
\item $x\preccurlyeq^\ast y$ means that there exists a surjection from a subset of $y$ onto $x$;
      $\mathfrak{a}\leqslant^\ast\mathfrak{b}$ means that $x\preccurlyeq^\ast y$.
\item $\mathfrak{a}\nleqslant\mathfrak{b}$ ($\mathfrak{a}\nleqslant^\ast\mathfrak{b}$)
      denotes the negation of $\mathfrak{a}\leqslant\mathfrak{b}$ ($\mathfrak{a}\leqslant^\ast\mathfrak{b}$).
\item $\mathfrak{a}<\mathfrak{b}$ means that $\mathfrak{a}\leqslant\mathfrak{b}$ and $\mathfrak{b}\nleqslant\mathfrak{a}$.
\item $\mathfrak{a}=^\ast\mathfrak{b}$ means that $\mathfrak{a}\leqslant^\ast\mathfrak{b}$ and $\mathfrak{b}\leqslant^\ast\mathfrak{a}$.
\end{enumerate}
\end{definition}

It follows from the Cantor--Bernstein theorem that if $\mathfrak{a}\leqslant\mathfrak{b}$ and
$\mathfrak{b}\leqslant\mathfrak{a}$ then $\mathfrak{a}=\mathfrak{b}$.
Clearly, if $\mathfrak{a}\leqslant\mathfrak{b}$ then $\mathfrak{a}\leqslant^\ast\mathfrak{b}$,
and if $\mathfrak{a}\leqslant^\ast\mathfrak{b}$ then $2^\mathfrak{a}\leqslant2^\mathfrak{b}$.
Thus $\mathfrak{a}=^\ast\mathfrak{b}$ implies that $2^\mathfrak{a}=2^\mathfrak{b}$.

\begin{definition}
Let $x,y$ be arbitrary sets, let $\mathfrak{a}=|x|$, and let $\mathfrak{b}=|y|$.
\begin{enumerate}[leftmargin=*, widest=1]
\item $x^y$ is the set of all functions from $y$ into $x$; $\mathfrak{a}^\mathfrak{b}=|x^y|$.
\item $x^{\underline{y}}$ is the set of all injections from $y$ into $x$; $\mathfrak{a}^{\underline{\mathfrak{b}}}=|x^{\underline{y}}|$.
\item $[x]^y$ is the set of all subsets of $x$ which have the same cardinality as $y$; $[\mathfrak{a}]^\mathfrak{b}=|[x]^y|$.
\item $\seq(x)=\bigcup_{n\in\omega}x^n$; $\seq(\mathfrak{a})=|\seq(x)|$.
\item $\seqi(x)=\bigcup_{n\in\omega}x^{\underline{n}}$; $\seqi(\mathfrak{a})=|\seqi(x)|$.
\item $\fin(x)=\bigcup_{n\in\omega}[x]^n$; $\fin(\mathfrak{a})=|\fin(x)|$.
\end{enumerate}
\end{definition}

Below we list some basic properties of these cardinals.
We first note that $\fin(\mathfrak{a})\leqslant^\ast\seqi(\mathfrak{a})\leqslant\seq(\mathfrak{a})$.

\begin{fact}\label{s002}
For all cardinals $\mathfrak{a}$, $\seqi(\mathfrak{a})\leqslant\fin(\fin(\mathfrak{a}))$.
\end{fact}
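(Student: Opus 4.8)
The plan is to exhibit a single explicit injection from $\seqi(x)$ into $\fin(\fin(x))$, where $x$ is any set with $|x|=\mathfrak{a}$; by the definition of $\leqslant$ this immediately gives $\seqi(\mathfrak{a})\leqslant\fin(\fin(\mathfrak{a}))$. The coding idea is to record a finite injective sequence by the $\subseteq$-chain of its initial-segment ranges. Concretely, for an injective sequence $s=\langle a_0,\dots,a_{n-1}\rangle\in x^{\underline{n}}$ I would set
\[
F(s)=\bigl\{\{a_0,\dots,a_i\}:i<n\bigr\},
\]
with $F$ sending the empty sequence to $\emptyset$. The point is that the nested family $\{a_0\}\subsetneq\{a_0,a_1\}\subsetneq\cdots\subsetneq\{a_0,\dots,a_{n-1}\}$ remembers not only the underlying set of terms but also the order in which they were listed.

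First I would check that $F$ lands in $\fin(\fin(x))$: each set $\{a_0,\dots,a_i\}$ is a finite subset of $x$, hence an element of $\fin(x)$, and $F(s)$ consists of only finitely many such sets, so $F(s)\in\fin(\fin(x))$. Moreover, since $s$ is injective, $|\{a_0,\dots,a_i\}|=i+1$, so the $n$ listed sets have pairwise distinct cardinalities and are therefore pairwise distinct; in particular $|F(s)|=n$, so the length of $s$ is already visible from $F(s)$.

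Then the heart of the argument is injectivity, established by decoding. Given $T=F(s)$, the elements of $T$ form a finite chain under $\subseteq$, which I would list canonically as $t_1\subsetneq t_2\subsetneq\cdots\subsetneq t_n$ with $|t_j|=j$; putting $t_0=\emptyset$, each difference $t_j\setminus t_{j-1}$ is a singleton, and taking its unique element recovers $a_{j-1}$. Thus $s$ is uniquely reconstructible from $F(s)$, and the empty sequence is the unique preimage of $\emptyset$, so $F$ is injective. The step most worth stating carefully is this decoding, though the only genuine obstacle is a conceptual rather than technical one: I must confirm that ordering a finite $\subseteq$-chain and selecting the unique element of a singleton are entirely canonical operations, so that the whole reconstruction—and hence the proof—proceeds in $\mathsf{ZF}$ without any appeal to choice.
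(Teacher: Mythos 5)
Your proposal is correct and is essentially the paper's own proof: the paper also codes an injective sequence $t$ by the chain of its initial-segment images, $f(t)=\{t[n]\mid n\leqslant\dom(t)\}$, and decodes by ordering the chain under inclusion (the only cosmetic difference is that the paper includes the empty initial segment, so its value on the empty sequence is $\{\varnothing\}$ rather than $\varnothing$). Your verification of injectivity and of choice-freeness is accurate.
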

\begin{proof}
For every set $x$, the function $f$ defined on $\seqi(x)$ given by $f(t)=\{t[n]\mid n\leqslant\dom(t)\}$
is an injection from $\seqi(x)$ into $\fin(\fin(x))$.
\end{proof}

\begin{lemma}\label{s003}
For all non-zero cardinals $\mathfrak{a}$, $\seq(\seq(\mathfrak{a}))=\seq(\mathfrak{a})$.
\end{lemma}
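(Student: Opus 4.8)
The plan is to establish the two inequalities $\seq(\mathfrak{a})\leqslant\seq(\seq(\mathfrak{a}))$ and $\seq(\seq(\mathfrak{a}))\leqslant\seq(\mathfrak{a})$ and then invoke the Cantor--Bernstein theorem. Fix a set $x$ with $|x|=\mathfrak{a}$. The first inequality I would dispatch immediately: the map $a\mapsto\langle a\rangle$ injects $x$ into $\seq(x)$, and applying a fixed injection componentwise shows that $\seq$ is monotone with respect to $\preccurlyeq$, so $\seq(x)\preccurlyeq\seq(\seq(x))$. Hence all of the content lies in the reverse inequality, i.e.\ in exhibiting an injection $g\colon\seq(\seq(x))\to\seq(x)$.

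The key idea for the reverse inequality is to split each element of $\seq(\seq(x))$ into a \emph{shape} and a \emph{content}. Given $S=\langle s_0,\dots,s_{k-1}\rangle\in\seq(\seq(x))$, I would let $\sigma_S=\langle\dom(s_0),\dots,\dom(s_{k-1})\rangle\in\seq(\omega)$ record the lengths and let $c_S=s_0\frown s_1\frown\cdots\frown s_{k-1}\in x^N$ be the concatenation, where $N$ is the sum of the entries of $\sigma_S$. The assignment $S\mapsto(\sigma_S,c_S)$ is injective, since $S$ is recovered by cutting $c_S$ into consecutive blocks of the lengths prescribed by $\sigma_S$; crucially this reconstruction is completely canonical and uses no choice. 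Thus it suffices to inject these pairs into $\seq(x)$.

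To do that I would absorb the shape, which carries only countably much information, into a prefix of the output. Since $\omega$ is well-ordered, $\seq(\omega)$ is well-orderable and countable, so I may fix once and for all an injection $e\colon\seq(\omega)\to\omega$. Assuming $\mathfrak{a}\geqslant2$, pick distinct $a,b\in x$ and set
\[
g(S)=\langle\underbrace{a,\dots,a}_{e(\sigma_S)}\rangle\frown\langle b\rangle\frown c_S.
\]
Reading $g(S)$ from the left, the position of the first occurrence of $b$ equals $e(\sigma_S)$, because the prefix consists solely of copies of $a$ and $a\neq b$; this recovers $\sigma_S$ and hence $N$, and the ensuing $N$ entries are exactly $c_S$. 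This explicit left inverse shows that $g$ is injective, settling the case $\mathfrak{a}\geqslant2$.

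The main obstacle is the degenerate case $\mathfrak{a}=1$, where $x$ has no two distinct elements and the delimiter trick is unavailable. I would handle it directly: if $|x|=1$ then the length map is a bijection $\seq(x)\to\omega$, so $\seq(\mathfrak{a})=\aleph_0$, and therefore $\seq(\seq(\mathfrak{a}))=\seq(\aleph_0)=|\seq(\omega)|=\aleph_0=\seq(\mathfrak{a})$, the middle equality again holding because $\seq(\omega)$ is countable. Apart from this special case, the only delicate point is to confirm that the shape/content decomposition and the decoding of $g$ are genuinely canonical, so that the argument is valid in $\mathsf{ZF}$; no appeal to choice is ever needed, since at each step the relevant object—the enumeration $e$ of the well-orderable set $\seq(\omega)$, the block decomposition, and the search for the first $b$—is uniquely determined.
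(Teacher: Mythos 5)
Your proof is correct, and it is self-contained where the paper is not: for this lemma the paper offers no argument at all, deferring entirely to Ellentuck (\emph{Generalized idempotence in cardinal arithmetic}, Lemma~2). Your shape/content decomposition with a delimiter is a standard and fully choice-free way to flatten $\seq(\seq(x))$ into $\seq(x)$: the fixed injection $e\colon\seq(\omega)\to\omega$ exists canonically because $\seq(\omega)$ carries a definable bijection with $\omega$, the block-cutting reconstruction from $(\sigma_S,c_S)$ is uniquely determined (including when some $s_i$ is empty, since a length-$0$ block decodes to the empty sequence), and the first occurrence of $b$ in $g(S)$ is genuinely at position $e(\sigma_S)$ because the prefix consists only of copies of $a\neq b$. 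You were also right to isolate the degenerate case $\mathfrak{a}=1$, where the delimiter trick fails but both sides are visibly $\aleph_0$. The only stylistic remark is that the forward inequality is even more immediate than you make it ($t\mapsto\langle t\rangle$ injects $\seq(x)$ into $\seq(\seq(x))$ directly, without invoking monotonicity of $\seq$), but that changes nothing. What your argument buys over the paper's citation is a short, explicit, verifiably $\mathsf{ZF}$ proof; what the citation buys is brevity and a pointer to Ellentuck's more general idempotence results.
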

\begin{proof}
Cf.~\cite[Lemma~2]{Ellentuck1966}.
\end{proof}

\begin{lemma}\label{s004}
For all non-zero cardinals $\mathfrak{a}$,
$\seq(\mathfrak{a})=\aleph_0\cdot\seqi(\mathfrak{a})$.
\end{lemma}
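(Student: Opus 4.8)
The plan is to establish the two inequalities $\aleph_0\cdot\seqi(\mathfrak{a})\leqslant\seq(\mathfrak{a})$ and $\seq(\mathfrak{a})\leqslant\aleph_0\cdot\seqi(\mathfrak{a})$ separately and then invoke the Cantor--Bernstein theorem. Throughout I would fix a set $x$ with $|x|=\mathfrak{a}$; since $\mathfrak{a}$ is non-zero, $x\neq\emptyset$. The first inequality is routine and I would handle it by recording three elementary facts. Since $\seqi(x)\subseteq\seq(x)$, we have $\seqi(\mathfrak{a})\leqslant\seq(\mathfrak{a})$. Fixing $a\in x$, the map sending $n$ to the constant sequence of length $n$ with value $a$ injects $\omega$ into $\seq(x)$, so $\aleph_0\leqslant\seq(\mathfrak{a})$. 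Finally, the map $(s,t)\mapsto\langle s,t\rangle$ injects $\seq(x)\times\seq(x)$ into $\seq(\seq(x))$, so by Lemma~\ref{s003} we obtain $\seq(\mathfrak{a})\cdot\seq(\mathfrak{a})\leqslant\seq(\seq(\mathfrak{a}))=\seq(\mathfrak{a})$. Combining these, $\aleph_0\cdot\seqi(\mathfrak{a})\leqslant\seq(\mathfrak{a})\cdot\seq(\mathfrak{a})=\seq(\mathfrak{a})$.

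The substantive direction is $\seq(\mathfrak{a})\leqslant\aleph_0\cdot\seqi(\mathfrak{a})$, and this is where I expect the real work to lie. The idea is to split each finite sequence into its list of distinct values, which is an injective sequence, together with a combinatorial pattern recording the repetitions, which is a finite sequence of natural numbers. Given $s\in x^k$, I would let $t\in\seqi(x)$ enumerate the values $s(i)$ at exactly those positions $i$ where $s(i)$ occurs for the first time, listed in increasing order of position; then $t$ is injective and $\ran(t)=\ran(s)$. I would then define $p\in\seq(\omega)$ with $\dom(p)=k$ by letting $p(i)$ be the unique index with $t(p(i))=s(i)$. Since $s(i)=t(p(i))$ for every $i<k$, the sequence $s$ is completely determined by the pair $(t,p)$.

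To finish, I would use that $\seq(\omega)$ is countable: there is a canonical injection of $\seq(\omega)$ into $\omega$ (for instance by prime-power coding), which requires no choice. Fixing such an injection $c\colon\seq(\omega)\to\omega$, I would define $F\colon\seq(x)\to\omega\times\seqi(x)$ by $F(s)=(c(p),t)$. This $F$ is injective, since $F(s)=F(s')$ forces $p=p'$ and $t=t'$, whence $\dom(p)=\dom(p')$ and $s(i)=t(p(i))=t'(p'(i))=s'(i)$ for all $i$, so $s=s'$. This yields $\seq(\mathfrak{a})\leqslant\aleph_0\cdot\seqi(\mathfrak{a})$, and together with the first direction and Cantor--Bernstein completes the proof. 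The one point demanding care is the \emph{uniform}, choice-free nature of the whole construction: the passage $s\mapsto(t,p)$ must be defined by an explicit recipe with no appeal to a choice principle, and the countability of $\seq(\omega)$ must rest on a concrete coding rather than on a countable-union argument, which could fail in $\mathsf{ZF}$.
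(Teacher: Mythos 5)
Your proof is correct. The paper itself gives no argument for this lemma --- it simply cites Lemma~2.22 of the Shen--Yuan paper --- so there is nothing internal to compare against, but your two directions are both sound and fully choice-free: the easy direction correctly reduces to Lemma~\ref{s003} via $\aleph_0\cdot\seqi(\mathfrak{a})\leqslant\seq(\mathfrak{a})\cdot\seq(\mathfrak{a})\leqslant\seq(\seq(\mathfrak{a}))=\seq(\mathfrak{a})$, and the substantive direction uses the standard uniform decomposition of a finite sequence into its repetition pattern (coded explicitly into $\omega$) together with the injective enumeration of its distinct values in order of first occurrence, exactly the kind of explicit recipe this lemma requires. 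The only cosmetic quibble is that you write $\seq(\mathfrak{a})\cdot\seq(\mathfrak{a})=\seq(\mathfrak{a})$ where you have only derived $\leqslant$; the reverse inequality is trivial (pad with the empty sequence), and in any case only $\leqslant$ is needed there.
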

\begin{proof}
Cf.~\cite[Lemma~2.22]{ShenYuan2019}.
\end{proof}

\begin{lemma}\label{s005}
For all infinite cardinals $\mathfrak{a}$, $\aleph_0\cdot\seqi(\mathfrak{a})\leqslant^\ast\seqi(\mathfrak{a})$.
\end{lemma}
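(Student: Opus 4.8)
The plan is to unwind the definition of $\leqslant^\ast$ and exhibit a single surjection from $\seqi(x)$ onto $\omega\times\seqi(x)$, where $x$ is a fixed set with $|x|=\mathfrak{a}$; this immediately yields $\aleph_0\cdot\seqi(\mathfrak{a})\leqslant^\ast\seqi(\mathfrak{a})$. (Alternatively, by Lemma~\ref{s004} the claim is equivalent to $\seq(\mathfrak{a})\leqslant^\ast\seqi(\mathfrak{a})$, which one could attack through the first-occurrence decomposition of an arbitrary finite sequence into a countable ``template'' and an injective sequence of its distinct values; I will instead give the direct construction, which is cleaner and avoids that bookkeeping.)

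The guiding observation is that, in the absence of choice, an injective sequence $t$ carries exactly two pieces of data that can be read off by an explicit formula: its length $\dom(t)\in\omega$ and the sequence itself. There is no order on $x$, and, since $t$ is injective, there is no repetition pattern to exploit, so the one natural number at our disposal is the length. The whole point is to let that single number simultaneously encode the $\aleph_0$-index and the amount of the sequence we retain. Concretely, I will fix two functions $\psi,\rho\colon\omega\to\omega$ with $\rho(k)\leqslant k$ for all $k$ and with the property that every pair $(n,r)\in\omega\times\omega$ equals $(\psi(k),\rho(k))$ for arbitrarily large $k$; such functions are readily built from a pairing function. I then define $g\colon\seqi(x)\to\omega\times\seqi(x)$ by $g(t)=(\psi(\dom(t)),\,t{\upharpoonright}\rho(\dom(t)))$. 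Since $\rho(\dom(t))\leqslant\dom(t)$, the restriction is always defined, so $g$ is a genuine, choice-free function on all of $\seqi(x)$, and a prefix of an injective sequence is again injective.

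It remains to check surjectivity. Given a target $(n^\ast,s^\ast)$ with $\dom(s^\ast)=r^\ast$, I choose $k$ large with $\psi(k)=n^\ast$ and $\rho(k)=r^\ast$; then $k\geqslant r^\ast$ automatically. Using that $\mathfrak{a}$ is infinite, I extend $s^\ast$ by $k-r^\ast$ further elements of $x\setminus\ran(s^\ast)$ to obtain an injective sequence $t$ of length $k$ with $t{\upharpoonright}r^\ast=s^\ast$; then $g(t)=(n^\ast,s^\ast)$. This is the only place the hypothesis of infiniteness is used, and it is exactly what guarantees that an arbitrary prefix of arbitrary length can be realized as part of an injective sequence of the prescribed length.

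The substantive point, and the only real obstacle, is the production of the natural-number coordinate without any appeal to choice: there is no canonical way to attach a number to an element of $x$, so every number we extract must come from the combinatorics of $t$, and for injective $t$ that combinatorics reduces to the length alone. Once one accepts that this crude invariant must do all the work, the construction is essentially forced, and the remaining verifications (the existence of $\psi$ and $\rho$, the well-definedness of $g$, and the extension step) are routine. I expect no difficulty beyond bookkeeping in the arithmetic of $\psi$ and $\rho$.
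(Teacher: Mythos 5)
Your construction is correct and is essentially the paper's own proof: the paper fixes a bijection $p\colon\omega\times\omega\to\omega$ with $n\leqslant p(m,n)$ and maps $t$ to $(m,t{\upharpoonright}n)$ where $p(m,n)=\dom(t)$, which is exactly your $g(t)=(\psi(\dom(t)),t{\upharpoonright}\rho(\dom(t)))$ with $\psi,\rho$ the coordinates of $p^{-1}$. The surjectivity check, including the use of infiniteness of $x$ to extend a given injective sequence to the required length, matches as well.
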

\begin{proof}
Let $x$ be an infinite set. Let $p$ be a bijection from $\omega\times\omega$ onto $\omega$
such that $n\leqslant p(m,n)$ for any $m,n\in\omega$.
Let $f$ be the function defined on $\seqi(x)$ given by
\[
f(t)=(m,t{\upharpoonright}n),
\]
where $m,n\in\omega$ are such that $\dom(t)=p(m,n)$.
It is easy to see that $f$ is a surjection from $\seqi(x)$ onto $\omega\times\seqi(x)$.
\end{proof}

\begin{proposition}\label{s006}
For all infinite cardinals $\mathfrak{a}$,
\[
\seqi(\mathfrak{a})=^\ast\fin(\fin(\mathfrak{a}))=^\ast\fin(\fin(\fin(\mathfrak{a})))=^\ast\dots=^\ast\seq(\mathfrak{a}).
\]
\end{proposition}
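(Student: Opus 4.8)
The plan is to prove that \emph{every} cardinal occurring in the chain is sandwiched, with respect to $\leqslant^\ast$, between its two endpoints $\seqi(\mathfrak{a})$ and $\seq(\mathfrak{a})$, and separately that these two endpoints satisfy $\seqi(\mathfrak{a})=^\ast\seq(\mathfrak{a})$. Since $\leqslant^\ast$ is transitive and $=^\ast$ is its associated equivalence relation, this immediately forces all the intermediate terms to be mutually $=^\ast$-equal, which is exactly the assertion. Writing $\fin^{(k)}$ for the $k$-fold iterate of $\fin$, the terms strictly between the endpoints are precisely the $\fin^{(k)}(\mathfrak{a})$ with $k\geqslant2$.

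First I would set down the two monotonicity principles I need, both valid in $\mathsf{ZF}$: if $\mathfrak{b}\leqslant^\ast\mathfrak{c}$ then $\fin(\mathfrak{b})\leqslant^\ast\fin(\mathfrak{c})$ and $\seq(\mathfrak{b})\leqslant^\ast\seq(\mathfrak{c})$. Both are witnessed by pushing a given surjection forward, taking images of finite sets in the first case and acting coordinatewise in the second; the finitely many preimage choices involved are harmless in $\mathsf{ZF}$. I would also record the trivial embedding $\mathfrak{b}\leqslant\fin(\mathfrak{b})$ (via singletons) and, from the chain $\fin(\mathfrak{b})\leqslant^\ast\seqi(\mathfrak{b})\leqslant\seq(\mathfrak{b})$ noted before Fact~\ref{s002}, the inequality $\fin(\mathfrak{b})\leqslant^\ast\seq(\mathfrak{b})$, valid for every cardinal $\mathfrak{b}$.

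The endpoints are handled by the earlier lemmas: Lemma~\ref{s004} and Lemma~\ref{s005} give $\seq(\mathfrak{a})=\aleph_0\cdot\seqi(\mathfrak{a})\leqslant^\ast\seqi(\mathfrak{a})$, and since $\seqi(\mathfrak{a})\leqslant\seq(\mathfrak{a})$ this yields $\seqi(\mathfrak{a})=^\ast\seq(\mathfrak{a})$. For the lower bounds, Fact~\ref{s002} gives $\seqi(\mathfrak{a})\leqslant\fin(\fin(\mathfrak{a}))$, and iterating $\mathfrak{b}\leqslant\fin(\mathfrak{b})$ shows $\fin(\fin(\mathfrak{a}))\leqslant\fin^{(k)}(\mathfrak{a})$ for every $k\geqslant2$; hence $\seqi(\mathfrak{a})\leqslant\fin^{(k)}(\mathfrak{a})$ for all such $k$. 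The upper bound $\fin^{(k)}(\mathfrak{a})\leqslant^\ast\seq(\mathfrak{a})$ I would prove by induction on $k$, the step being
\[
\fin^{(k+1)}(\mathfrak{a})=\fin\bigl(\fin^{(k)}(\mathfrak{a})\bigr)\leqslant^\ast\seq\bigl(\fin^{(k)}(\mathfrak{a})\bigr)\leqslant^\ast\seq(\seq(\mathfrak{a}))=\seq(\mathfrak{a}),
\]
where the first $\leqslant^\ast$ is the instance $\fin(\mathfrak{b})\leqslant^\ast\seq(\mathfrak{b})$, the second is $\seq$-monotonicity applied to the induction hypothesis $\fin^{(k)}(\mathfrak{a})\leqslant^\ast\seq(\mathfrak{a})$, and the final equality is Lemma~\ref{s003}.

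Combining these, for each $k\geqslant2$ we have $\seqi(\mathfrak{a})\leqslant\fin^{(k)}(\mathfrak{a})\leqslant^\ast\seq(\mathfrak{a})=^\ast\seqi(\mathfrak{a})$, so each $\fin^{(k)}(\mathfrak{a})$ is $=^\ast$-equal to both endpoints, and transitivity of $=^\ast$ ties the entire chain together. I expect the main obstacle to be conceptual rather than computational: because the chain has infinitely many terms one cannot argue by a single finite cycle of inequalities, so the squeezing strategy together with the self-absorption identity $\seq(\seq(\mathfrak{a}))=\seq(\mathfrak{a})$ (which alone keeps the iterated $\seq$ from growing in the inductive step) is what makes the argument go through. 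A secondary point to watch is that every growth estimate must be phrased with $\leqslant^\ast$ rather than $\leqslant$, since in the absence of choice $\fin(\mathfrak{a})\leqslant\seq(\mathfrak{a})$ may fail.
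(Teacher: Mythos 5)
Your proposal is correct and is essentially the argument the paper intends: the paper's proof is just the citation of Fact~\ref{s002} and Lemmata~\ref{s003}, \ref{s004} and \ref{s005}, and your sandwiching of each $\fin^{(k)}(\mathfrak{a})$ between $\seqi(\mathfrak{a})$ and $\seq(\mathfrak{a})$ (using the monotonicity of $\fin$ and $\seq$ under $\leqslant^\ast$ together with $\seq(\seq(\mathfrak{a}))=\seq(\mathfrak{a})$) is the natural way to spell out that ``immediately follows.'' No gaps.
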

\begin{proof}
Immediately follows from Fact~\ref{s002} and Lemmata~\ref{s003}, \ref{s004} and \ref{s005}.
\end{proof}

\begin{corollary}\label{s007}
For all infinite cardinals $\mathfrak{a}$,
\[
2^{\seqi(\mathfrak{a})}=2^{\fin(\fin(\mathfrak{a}))}=2^{\fin(\fin(\fin(\mathfrak{a})))}=\dots=2^{\seq(\mathfrak{a})}.
\]
\end{corollary}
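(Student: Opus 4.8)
The plan is to read this off directly from Proposition~\ref{s006}. That proposition already supplies the chain of $=^\ast$-equalities
\[
\seqi(\mathfrak{a})=^\ast\fin(\fin(\mathfrak{a}))=^\ast\fin(\fin(\fin(\mathfrak{a})))=^\ast\dots=^\ast\seq(\mathfrak{a}),
\]
so all the combinatorial content has been done already; what remains is purely formal.

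The one ingredient I would invoke is the basic observation recorded just after the first definition: if $\mathfrak{a}\leqslant^\ast\mathfrak{b}$ then $2^\mathfrak{a}\leqslant2^\mathfrak{b}$, and hence $\mathfrak{a}=^\ast\mathfrak{b}$ implies $2^\mathfrak{a}=2^\mathfrak{b}$. (Concretely, a surjection from a subset of $y$ onto $x$ yields an injection from the power set of $x$ into the power set of $y$, so $2^\mathfrak{a}\leqslant2^\mathfrak{b}$; applying this in both directions and using the Cantor--Bernstein theorem gives $2^\mathfrak{a}=2^\mathfrak{b}$.) First I would apply this implication to each consecutive pair appearing in the displayed chain, converting every $=^\ast$ into an honest equality of the corresponding powers of $2$. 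Since equality of cardinals is transitive, the resulting equalities splice together into the single chain asserted by the corollary.

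Thus the statement is an immediate corollary, and I expect no real obstacle: every nontrivial step — the surjection witnessing $\fin(\mathfrak{a})\leqslant^\ast\seqi(\mathfrak{a})$, Fact~\ref{s002}, and Lemmata~\ref{s003}--\ref{s005} — has already been absorbed into Proposition~\ref{s006}. The only point deserving a moment's care is to confirm that it is exactly the monotonicity of $2^{(\cdot)}$ with respect to $\leqslant^\ast$ that licenses passing from $=^\ast$ to $=$, which is precisely the remark made in the text preceding the definition of the cardinal operations.
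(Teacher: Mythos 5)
Your proposal is correct and is exactly the paper's argument: the paper's proof of this corollary simply says it follows immediately from Proposition~\ref{s006}, relying on the same observation that $\mathfrak{a}=^\ast\mathfrak{b}$ implies $2^\mathfrak{a}=2^\mathfrak{b}$ recorded after the first definition. Your parenthetical justification of that observation (a surjection from a subset of $y$ onto $x$ induces an injection of power sets via preimages, plus Cantor--Bernstein) is also the right one.
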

\begin{proof}
Immediately follows from Proposition~\ref{s006}.
\end{proof}

The following lemma will be used in Section~\ref{s009}.

\begin{lemma}\label{s008}
For all cardinals $\mathfrak{a}$ and all $n\in\omega$, $\mathfrak{a}^{\underline{2^n}}\leqslant\fin(\mathfrak{a})^{n+1}$.
\end{lemma}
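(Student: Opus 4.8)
The plan is to fix a set $x$ with $|x|=\mathfrak{a}$ and to represent the cardinal $2^n$ by the set ${}^n2$ of all binary sequences of length $n$ (i.e.\ functions from $n$ into $2$), which has exactly $2^n$ elements. Writing $I$ for the set of all injections from ${}^n2$ into $x$, so that $\mathfrak{a}^{\underline{2^n}}=|I|$, I would exhibit an explicit injection $F\colon I\to\fin(x)^{n+1}$; the desired inequality then follows at once. The idea is to encode an injection $t$ by recording, for each bit position, which elements of the range are assigned a $1$ in that position, together with the range itself. Concretely, to $t\in I$ I assign the $(n+1)$-tuple
\[
F(t)=\bigl(B_0,\dots,B_{n-1},\ran(t)\bigr),\qquad B_i=\{t(s):s\in{}^n2\text{ and }s(i)=1\},
\]
where $i<n$. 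Each $B_i$ has $2^{n-1}$ elements and $\ran(t)$ has $2^n$ elements, so all coordinates lie in $\fin(x)$ and $F$ indeed maps into $\fin(x)^{n+1}$.

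The key step is to show that $F$ is injective by reconstructing $t$ from the tuple $(B_0,\dots,B_{n-1},\ran(t))$. Here the injectivity of $t$ is decisive: for every $y\in\ran(t)$ and every $i<n$ one has $y\in B_i$ if and only if the $i$-th entry of $t^{-1}(y)$ equals $1$. Consequently, given the tuple I can define, for each $y\in\ran(t)$, a sequence $s_y\in{}^n2$ by the rule $s_y(i)=1$ exactly when $y\in B_i$, and this $s_y$ is precisely $t^{-1}(y)$. Thus $t=\{(s_y,y):y\in\ran(t)\}$ is completely determined by its image under $F$, which yields the injectivity of $F$.

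The only real subtlety I anticipate is accounting for the roles of the coordinates, which also explains why $n+1$ rather than $n$ factors are needed: the $n$ sets $B_0,\dots,B_{n-1}$ carry exactly the information needed to read off $t^{-1}(y)$ from membership, but the sequence consisting entirely of zeros is mapped by $t$ to an element lying in none of the $B_i$, so its image would be invisible without the extra coordinate $\ran(t)$. Once this bookkeeping is in place, the verification that $s_y=t^{-1}(y)$ is an immediate consequence of the injectivity of $t$, and since $F$ is given by an explicit formula no appeal to the axiom of choice is required. I should also check the boundary case $n=0$ separately, where ${}^02$ is a singleton, there are no sets $B_i$, and $F(t)=(\ran(t))$ reduces to sending the single value of $t$ to its range.
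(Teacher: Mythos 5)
Your proposal is correct and follows essentially the same route as the paper: the paper encodes an injection $t$ defined on $\wp(n)$ (rather than on ${}^n2$) by the tuple whose $k$-th coordinate is $\{t(a)\mid k\in a\}$ for $k<n$, with $\{t(\varnothing)\}$ in place of your $\ran(t)$ as the extra coordinate, and recovers $t$ from intersections and differences of these sets using the injectivity of $t$, exactly as you recover $t^{-1}(y)$ from the membership pattern of $y$ in the $B_i$. The only cosmetic difference is your choice of $\ran(t)$ versus the paper's $\{t(\varnothing)\}$ for the $(n+1)$-st coordinate; both carry the information lost for the all-zeros sequence.
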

\begin{proof}
Let $x$ be an arbitrary set and let $n\in\omega$.
Let $f$ be the function defined on $x^{\underline{\wp(n)}}$ such that for all $t\in x^{\underline{\wp(n)}}$,
$f(t)$ is the function on $n+1$ given by
\[
f(t)(k)=
\begin{cases}
\{t(\varnothing)\},                       & \text{if $k=n$;}\\
\{t(a)\mid a\subseteq n\text{ and }k\in a\}, & \text{otherwise.}
\end{cases}
\]
Clearly, $\ran(f)\subseteq\fin(x)^{n+1}$.
It is easy to verify that for all $t\in x^{\underline{\wp(n)}}$,
$t$ is the function defined on $\wp(n)$ given by
\[
t(a)=
\begin{cases}
\bigcup f(t)(n),                                                                       & \text{if $a=\varnothing$;}\\
\bigcup\bigl(\bigcap_{k\in a}f(t)(k)\setminus\bigcup_{k\in n\setminus a}f(t)(k)\bigr), & \text{otherwise.}
\end{cases}
\]
Hence, $f$ is an injection from $x^{\underline{\wp(n)}}$ into $\fin(x)^{n+1}$.
\end{proof}

\section{The main theorem}
In this section, we prove our main result which states that
for all infinite cardinals $\mathfrak{a}$ and all natural numbers $n$,
\[
2^{\fin(\mathfrak{a})^n}=2^{[\fin(\mathfrak{a})]^n}.
\]
The main idea of the proof is originally from \cite{Lauchli1961}.

Fix an arbitrary infinite set $A$ and a non-zero natural number $n$.
For a finite sequence $\langle x_1,\dots,x_n\rangle$ of length $n$,
we write $\vec{x}=\langle x_1,\dots,x_n\rangle$ for short.
For finite sequences $\vec{x}=\langle x_1,\dots,x_n\rangle$ and $\vec{y}=\langle y_1,\dots,y_n\rangle$,
we introduce the following abbreviations:
$\vec{x}\sqsubseteq\vec{y}$ means that $x_i\subseteq y_i$ for any $i=1,\dots,n$;
$\vec{x}\sqsubset\vec{y}$ means that $\vec{x}\sqsubseteq\vec{y}$ but $\vec{x}\neq\vec{y}$;
$\vec{x}\sqcup\vec{y}$ denotes the finite sequence $\langle x_1\cup y_1,\dots,x_n\cup y_n\rangle$;
$\vec{x}\sqcap\vec{y}$ denotes the finite sequence $\langle x_1\cap y_1,\dots,x_n\cap y_n\rangle$;
$\vec{\varnothing}$ denotes the finite sequence $\langle\varnothing,\dots,\varnothing\rangle$ of length $n$.
For an operator $H$ and an $m\in\omega$, we write $H^{(m)}(X)$ for $H(H(\cdots H(X)\cdots))$ ($m$ times),
and if $m=0$ then $H^{(0)}(X)$ is $X$ itself.

\begin{definition}
For all natural numbers $k_1,\dots,k_n$ and $l_1,\dots,l_n$ such that $k_i\leqslant l_i$ for any $i=1,\dots,n$,
we introduce the following three functions:
\begin{enumerate}[leftmargin=*, widest=1]
\item $F_{n,\vec{k},\vec{l}\,}$ is the function defined on $\wp([A]^{k_1}\times\dots\times[A]^{k_n})$ given by
\[
F_{n,\vec{k},\vec{l}\,}(X)=\bigl\{\vec{y}\in[A]^{l_1}\times\dots\times[A]^{l_n}\!\bigm|\vec{x}\sqsubseteq\vec{y}\text{ for some }\vec{x}\in X\bigr\};
\]
\item $G_{n,\vec{k},\vec{l}\,}$ is the function defined on $\wp([A]^{k_1}\times\dots\times[A]^{k_n})$ given by
\[
G_{n,\vec{k},\vec{l}\,}(X)=\left\{\vec{x}\in[A]^{k_1}\times\dots\times[A]^{k_n}\,\middle|
\begin{array}{l}
\text{for all }\vec{y}\in[A]^{l_1}\times\dots\times[A]^{l_n}\!\!\!\\
\text{if }\vec{x}\sqsubseteq\vec{y}\text{ then }\vec{y}\in F_{n,\vec{k},\vec{l}\,}(X)
\end{array}
\right\};
\]
\item $H_{n,\vec{k},\vec{l}\,}$ is the function defined on $\wp([A]^{k_1}\times\dots\times[A]^{k_n})$ given by
\[
H_{n,\vec{k},\vec{l}\,}(X)=G_{n,\vec{k},\vec{l}\,}(X)\setminus X.
\]
\end{enumerate}
\end{definition}

The proof of the following fact is easy and will be omitted.

\begin{fact}\label{s011}
Let $k_1,\dots,k_n$ and $l_1,\dots,l_n$ be natural numbers such that $k_i\leqslant l_i$ for any $i=1,\dots,n$.
\begin{enumerate}[label=\upshape(\roman*), leftmargin=*, widest=xiii]
\item If $X\subseteq Y\subseteq[A]^{k_1}\times\dots\times[A]^{k_n}$ then $F_{n,\vec{k},\vec{l}\,}(X)\subseteq F_{n,\vec{k},\vec{l}\,}(Y)$.\label{s011a}
\item If $X\subseteq[A]^{k_1}\times\dots\times[A]^{k_n}$ then $X\subseteq G_{n,\vec{k},\vec{l}\,}(X)$.\label{s011b}
\item If $X\subseteq Y\subseteq[A]^{k_1}\times\dots\times[A]^{k_n}$ then $G_{n,\vec{k},\vec{l}\,}(X)\subseteq G_{n,\vec{k},\vec{l}\,}(Y)$.\label{s011c}
\item If $X\subseteq[A]^{k_1}\times\dots\times[A]^{k_n}$
      then $G_{n,\vec{k},\vec{l}\,}(G_{n,\vec{k},\vec{l}\,}(X))=G_{n,\vec{k},\vec{l}\,}(X)$.\label{s011d}
\item If $X\subseteq[A]^{k_1}\times\dots\times[A]^{k_n}$
      then $F_{n,\vec{k},\vec{l}\,}(G_{n,\vec{k},\vec{l}\,}(X))=F_{n,\vec{k},\vec{l}\,}(X)$.\label{s011e}
\item $F_{n,\vec{k},\vec{l}\,}$ is injective on $\{X\subseteq[A]^{k_1}\times\dots\times[A]^{k_n}\mid G_{n,\vec{k},\vec{l}\,}(X)=X\}$.\label{s011f}
\item If $X\subseteq[A]^{k_1}\times\dots\times[A]^{k_n}$ and $m\in\omega$ then \label{s011g}
\[
H_{n,\vec{k},\vec{l}\,}^{(m)}(X)=G_{n,\vec{k},\vec{l}\,}(H_{n,\vec{k},\vec{l}\,}^{(m)}(X))\setminus H_{n,\vec{k},\vec{l}\,}^{(m+1)}(X).
\]
\item Let $l'_1,\dots,l'_n$ be natural numbers such that $l_i\leqslant l'_i$ for any $i=1,\dots,n$.
      If $X\subseteq[A]^{k_1}\times\dots\times[A]^{k_n}$ then $G_{n,\vec{k},\vec{l}\,}(X)\subseteq G_{n,\vec{k},\vec{l'}}(X)$,
      and hence $G_{n,\vec{k},\vec{l'}}(X)=X$ implies that $G_{n,\vec{k},\vec{l}\,}(X)=X$.\label{s011h}
\end{enumerate}
\end{fact}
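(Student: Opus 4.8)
The plan is to unwind the definitions of $F_{n,\vec{k},\vec{l}}$, $G_{n,\vec{k},\vec{l}}$, and $H_{n,\vec{k},\vec{l}}$; once one structural identity for $F\circ G$ and one interpolation step are in place, every item drops out. I abbreviate these operators by $F$, $G$, $H$ when the parameters are fixed, and I write the $\vec{k}$-product for $[A]^{k_1}\times\dots\times[A]^{k_n}$ and likewise for $\vec{l}$ and $\vec{l'}$. Items \ref{s011a}--\ref{s011c} are immediate from the definitions: for \ref{s011a}, a witness $\vec{x}\in X$ for $\vec{y}\in F(X)$ remains a witness once $X\subseteq Y$; for \ref{s011b}, if $\vec{x}\in X$ then every $\vec{y}\sqsupseteq\vec{x}$ in the $\vec{l}$-product lies in $F(X)$ with witness $\vec{x}$, so $\vec{x}\in G(X)$; and \ref{s011c} combines the definition of $G$ with \ref{s011a}.

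The core is the identity $F(G(X))=F(X)$, which is \ref{s011e} and drives \ref{s011d} and \ref{s011f}. One inclusion is \ref{s011a} applied to \ref{s011b}; for the other, if $\vec{y}\in F(G(X))$ then some $\vec{x}\in G(X)$ has $\vec{x}\sqsubseteq\vec{y}$, and since $\vec{y}$ itself lies in the $\vec{l}$-product with $\vec{x}\sqsubseteq\vec{y}$, the defining clause of $G$ yields $\vec{y}\in F(X)$. Now \ref{s011d} follows: $G(X)\subseteq G(G(X))$ is \ref{s011b}, while any $\vec{x}\in G(G(X))$ has all its extensions in $F(G(X))=F(X)$, so $\vec{x}\in G(X)$. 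For \ref{s011f} I would observe that $\vec{x}\in G(X)$ depends on $X$ only through $F(X)$, whence $F(X)=F(Y)$ forces $G(X)=G(Y)$, i.e. $X=Y$ on $G$-closed sets. For \ref{s011g}, after checking inductively that each $H^{(m)}(X)$ stays inside the $\vec{k}$-product (so that $G$ and $H$ may be reapplied), write $Z=H^{(m)}(X)$; then $H^{(m+1)}(X)=G(Z)\setminus Z$, and $G(Z)\setminus(G(Z)\setminus Z)=G(Z)\cap Z=Z$ by \ref{s011b}, which is exactly the asserted formula.

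The main obstacle is \ref{s011h}, the only comparison of two different operators. I would prove $G_{n,\vec{k},\vec{l}}(X)\subseteq G_{n,\vec{k},\vec{l'}}(X)$ by interpolation. Given $\vec{x}\in G_{n,\vec{k},\vec{l}}(X)$ and any $\vec{z}$ in the $\vec{l'}$-product with $\vec{x}\sqsubseteq\vec{z}$, the inequalities $k_i\leqslant l_i\leqslant l'_i$ let me choose, for each $i$, a set $y_i$ with $x_i\subseteq y_i\subseteq z_i$ and $|y_i|=l_i$ (a finite, choice-free selection); the resulting $\vec{y}$ satisfies $\vec{x}\sqsubseteq\vec{y}\sqsubseteq\vec{z}$ and lies in the $\vec{l}$-product. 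Then $\vec{y}\in F_{n,\vec{k},\vec{l}}(X)$, so some $\vec{x}'\in X$ has $\vec{x}'\sqsubseteq\vec{y}\sqsubseteq\vec{z}$, giving $\vec{z}\in F_{n,\vec{k},\vec{l'}}(X)$; as $\vec{z}$ was arbitrary, $\vec{x}\in G_{n,\vec{k},\vec{l'}}(X)$. The closing implication is then formal: if $G_{n,\vec{k},\vec{l'}}(X)=X$, then $X\subseteq G_{n,\vec{k},\vec{l}}(X)\subseteq G_{n,\vec{k},\vec{l'}}(X)=X$ by \ref{s011b} and the inclusion just shown, forcing $G_{n,\vec{k},\vec{l}}(X)=X$.
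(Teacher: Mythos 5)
Your proof is correct, and since the paper explicitly omits the proof of this fact as easy, your routine unwinding of the definitions (with the $F(G(X))=F(X)$ identity driving \ref{s011d}--\ref{s011f} and the interpolation of an $\vec{l}$-tuple between $\vec{x}$ and $\vec{z}$ for \ref{s011h}) is exactly the intended verification. No gaps: the finitely many selections in \ref{s011h} need no choice, and your inductive remark that $H^{(m)}(X)$ stays inside the $\vec{k}$-product is the one point worth making explicit for \ref{s011g}.
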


The key step of our proof is the following lemma.

\begin{lemma}\label{s010}
For all natural numbers $k_1,\dots,k_n$ and $l_1,\dots,l_n$ such that $k_i\leqslant l_i$ for any $i=1,\dots,n$,
if $X\subseteq[A]^{k_1}\times\dots\times[A]^{k_n}$ then
\[
H_{n,\vec{k},\vec{l}\,}^{(k_1+\dots+k_n+1)}(X)=\varnothing.
\]
\end{lemma}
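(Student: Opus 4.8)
The plan is to recast the conclusion as a bound on how deep a single tuple can sit in the iteration. Writing $X_m := H_{n,\vec{k},\vec{l}\,}^{(m)}(X)$, it suffices to show that $X_m=\varnothing$ whenever $m>k_1+\dots+k_n$. Throughout I would lean on Fact~\ref{s011}: parts~\ref{s011b}, \ref{s011c} and \ref{s011d} say that $G_{n,\vec{k},\vec{l}\,}$ is a closure operator (extensive, monotone, idempotent), while part~\ref{s011g} gives the structural identity $X_m=G_{n,\vec{k},\vec{l}\,}(X_m)\setminus X_{m+1}$; combined with the definition $X_{m+1}=G_{n,\vec{k},\vec{l}\,}(X_m)\setminus X_m$ this yields $G_{n,\vec{k},\vec{l}\,}(X_m)=X_m\cup X_{m+1}$ with $X_m\cap X_{m+1}=\varnothing$. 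One preliminary observation I would record for free use is that $G_{n,\vec{k},\vec{l}\,}(\varnothing)=\varnothing$: every tuple $\vec{x}$ has an extension $\vec{y}\sqsupseteq\vec{x}$ with $|y_i|=l_i$ (here the infinitude of $A$ is used), and such $\vec{y}\notin F_{n,\vec{k},\vec{l}\,}(\varnothing)=\varnothing$, so $\vec{x}\notin G_{n,\vec{k},\vec{l}\,}(\varnothing)$. I would then run the proof by induction on $N=k_1+\dots+k_n$.

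For the base case $N=0$ all $k_i=0$, so $[A]^{k_1}\times\dots\times[A]^{k_n}=\{\vec{\varnothing}\}$ is a single point and $X$ is either $\varnothing$ or $\{\vec{\varnothing}\}$. In the former case $H_{n,\vec{k},\vec{l}\,}(X)=G_{n,\vec{k},\vec{l}\,}(\varnothing)\setminus\varnothing=\varnothing$, and in the latter $G_{n,\vec{k},\vec{l}\,}(\{\vec{\varnothing}\})=\{\vec{\varnothing}\}$ by extensivity, so again $H_{n,\vec{k},\vec{l}\,}(X)=\varnothing$. Thus $X_1=\varnothing$, as required.

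For the inductive step the idea is to \emph{decrement one coordinate}. Fix $i$ with $k_i\geqslant1$ and an element $a\in A$, and pass to the reduced system with parameters $(n,\vec{k}',\vec{l}')$ where $k'_i=k_i-1$ and $l'_i=l_i-1$ (and all other entries unchanged), obtained by deleting $a$ from the $i$-th coordinate. Since every extension $\vec{y}\sqsupseteq\vec{x}$ of a tuple with $a\in x_i$ again has $a\in y_i$, the extension quantifier defining $F_{n,\vec{k},\vec{l}\,}$ and $G_{n,\vec{k},\vec{l}\,}$ transfers verbatim to the reduced system, whose total is $N-1$; by the induction hypothesis its iteration is empty after $N$ steps. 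The remaining content is to show that a tuple can only persist into layer $N+1$ by possessing, in a suitable coordinate, an element whose deletion strictly lowers its layer index, so that $N+1$ layers in the full system would force $N$ layers in some reduced system, a contradiction; the bound $k_1+\dots+k_n+1$ then reflects that a tuple has exactly $k_1+\dots+k_n$ element-slots to be used up, one per layer.

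The hard part will be the \emph{witnesses}. In $G_{n,\vec{k},\vec{l}\,}(X_m)$ the tuple $\vec{x}'\in X_m$ certifying that an extension $\vec{y}$ lies in $F_{n,\vec{k},\vec{l}\,}(X_m)$ need not have $a$ in its $i$-th coordinate, so the ``$a$-in-slot-$i$'' part of the configuration is not closed under the operator and does not split off as a clean product; this cross-talk is the genuine obstacle. The crux is to show that the witnesses omitting $a$ can be absorbed into the base set of the reduced system without raising any layer index, exploiting the phenomenon already visible in the one-coordinate case, namely that a tuple enters $G_{n,\vec{k},\vec{l}\,}(X)\setminus X$ only through a cofinite pattern among the newly admissible elements. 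Making this absorption precise, and verifying that deleting one forced element drops the layer by exactly one, is where the Läuchli-style combinatorics (cf.~\cite{Lauchli1961}) concentrates; everything else is bookkeeping with the closure identities of Fact~\ref{s011}.
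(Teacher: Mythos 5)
Your proposal is a plan, not a proof: the inductive step's central claim --- that a tuple can survive to layer $N+1$ only by ``possessing an element whose deletion strictly lowers its layer index,'' and that the witnesses omitting $a$ can be ``absorbed'' into the reduced system --- is exactly the combinatorial content of the lemma, and you explicitly leave it unestablished. The cross-talk you correctly identify is not a technical nuisance to be cleaned up later; it is the whole difficulty. A tuple $\vec{y}\sqsupseteq\vec{x}$ with $a\in y_i$ certifies membership of $\vec{x}$ in $G_{n,\vec{k},\vec{l}\,}(X)$ via some $\vec{z}\in X$ with $\vec{z}\sqsubseteq\vec{y}$, and $\vec{z}$ may or may not contain $a$; which alternative occurs can vary wildly with $\vec{y}$, so the ``contains $a$'' slice of the system is not closed under $F$ and $G$ and there is no evident reduced system to which the induction hypothesis applies. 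Your sketch contains no mechanism for taming this variation.

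The paper's proof supplies that mechanism with a Ramsey-type theorem for products $[S_1]^{j_1}\times\dots\times[S_n]^{j_n}$ (Lemma~\ref{s013}). It introduces a predicate $\psi(X,\vec{x})$ asserting that for every $r$ there is an $\vec{y}\in([A]^r)^n$, disjoint from $\vec{x}$, all of whose $\vec{k}$-sub-tuples complete $\vec{x}$ into $X$; the key step \eqref{s014} shows that $\psi(H(X),\vec{x})$ forces $\psi(X,\vec{u})$ for some \emph{strictly smaller} $\vec{u}\sqsubset\vec{x}$. The descent that yields the bound $k_1+\dots+k_n+1$ is therefore on the auxiliary sub-tuple $\vec{u}$ (which can shrink at most $k_1+\dots+k_n$ times), not on the system parameters $\vec{k},\vec{l}$, and the step of the descent is obtained by colouring the extensions $\vec{w}$ over a large set $\vec{S}$ according to which sub-tuple $\vec{u}=\vec{a}\sqcap\vec{x}$ of $\vec{x}$ the witness $\vec{a}\in X$ uses, then extracting a homogeneous product $[T_1]^{\cdot}\times\dots\times[T_n]^{\cdot}$ on which a single $\vec{u}$ works uniformly. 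This homogenization is precisely the ``absorption'' your proposal postpones, and without Ramsey's theorem (or an equivalent) I see no way to carry out your coordinate-decrementing induction. Your preliminary observations ($G(\varnothing)=\varnothing$, the base case $N=0$, and the identity $G(X_m)=X_m\cup X_{m+1}$ as a disjoint union) are all correct but peripheral.
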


Before we prove Lemma~\ref{s010}, we use it to prove our main theorem.

\begin{theorem}\label{s012}
For all infinite cardinals $\mathfrak{a}$ and all natural numbers $n$,
\[
2^{\fin(\mathfrak{a})^n}=2^{[\fin(\mathfrak{a})]^n}.
\]
\end{theorem}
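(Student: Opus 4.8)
The plan is to establish the two inequalities $2^{[\fin(\mathfrak{a})]^n}\leqslant 2^{\fin(\mathfrak{a})^n}$ and $2^{\fin(\mathfrak{a})^n}\leqslant 2^{[\fin(\mathfrak{a})]^n}$ separately and conclude by the Cantor--Bernstein theorem for cardinals; here $\mathfrak{a}=|A|$. The case $n=0$ is trivial, since both sides equal $2$, so I would assume $n\geqslant 1$. The first inequality is the easy one: the set of injective $n$-tuples of finite subsets of $A$ is a subset of $\fin(A)^n$, and forgetting the order gives a surjection from it onto $[\fin(A)]^n$, so $[\fin(\mathfrak{a})]^n\leqslant^\ast\fin(\mathfrak{a})^n$, whence $2^{[\fin(\mathfrak{a})]^n}\leqslant 2^{\fin(\mathfrak{a})^n}$ because $\mathfrak{c}\leqslant^\ast\mathfrak{d}$ implies $2^{\mathfrak{c}}\leqslant 2^{\mathfrak{d}}$. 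All the real work goes into the reverse inequality, for which I would build an explicit injection $\Phi\colon\wp(\fin(A)^n)\to\wp([\fin(A)]^n)$.

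The idea behind $\Phi$ is that a tuple $\vec{y}=\langle y_1,\dots,y_n\rangle$ whose coordinates have pairwise distinct sizes $l_1<\dots<l_n$ may be identified with the $n$-element set $\{y_1,\dots,y_n\}\in[\fin(A)]^n$, from which $\vec{y}$ is recovered by sorting its members by size; the sets of a fixed strictly increasing size-signature $\vec{l}$ thus form a ``slot'' inside $[\fin(A)]^n$, and distinct signatures yield disjoint slots. Given $X\subseteq\fin(A)^n$, I would decompose it into its homogeneous pieces $X_{\vec{k}}=X\cap([A]^{k_1}\times\dots\times[A]^{k_n})$ for $\vec{k}\in\omega^n$. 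Fixing once and for all an injection that assigns to each pair $(\vec{k},m)$ with $m\leqslant k_1+\dots+k_n$ a strictly increasing signature $\vec{l}=\vec{l}^{\,\vec{k},m}$ with $k_i\leqslant l_i$ (possible since there are only countably many such pairs and infinitely many admissible signatures), I would place into the slot for $(\vec{k},m)$ the image under $\vec{y}\mapsto\{y_1,\dots,y_n\}$ of $F_{n,\vec{k},\vec{l}}(H_{n,\vec{k},\vec{l}}^{(m)}(X_{\vec{k}}))$. Because the slots are pairwise disjoint, $\Phi(X)$ is a well-defined subset of $[\fin(A)]^n$.

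It remains to recover $X$ from $\Phi(X)$, which is where Fact~\ref{s011} and Lemma~\ref{s010} enter. Reading off the slot for $(\vec{k},m)$ and sorting by size returns $F_{n,\vec{k},\vec{l}}(H_{n,\vec{k},\vec{l}}^{(m)}(X_{\vec{k}}))$. Since $F_{n,\vec{k},\vec{l}}$ is injective on $G_{n,\vec{k},\vec{l}}$-closed sets while $F_{n,\vec{k},\vec{l}}(Z)=F_{n,\vec{k},\vec{l}}(G_{n,\vec{k},\vec{l}}(Z))$ with $G_{n,\vec{k},\vec{l}}(Z)$ closed (parts~\ref{s011d},~\ref{s011e}, and~\ref{s011f} of Fact~\ref{s011}), this value determines $G_{n,\vec{k},\vec{l}}(H_{n,\vec{k},\vec{l}}^{(m)}(X_{\vec{k}}))$. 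Because Lemma~\ref{s010} gives $H_{n,\vec{k},\vec{l}}^{(k_1+\dots+k_n+1)}(X_{\vec{k}})=\varnothing$, I would run a downward recursion on $m$: knowing both $H_{n,\vec{k},\vec{l}}^{(m+1)}(X_{\vec{k}})$ and $G_{n,\vec{k},\vec{l}}(H_{n,\vec{k},\vec{l}}^{(m)}(X_{\vec{k}}))$, the identity in part~\ref{s011g} of Fact~\ref{s011} recovers $H_{n,\vec{k},\vec{l}}^{(m)}(X_{\vec{k}})$. Starting from the empty top layer and descending to $m=0$ recovers $X_{\vec{k}}=H_{n,\vec{k},\vec{l}}^{(0)}(X_{\vec{k}})$, after which $X=\bigcup_{\vec{k}}X_{\vec{k}}$; hence $\Phi$ is injective and $2^{\fin(\mathfrak{a})^n}\leqslant 2^{[\fin(\mathfrak{a})]^n}$.

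The main obstacle is exactly the non-injectivity of the inflation map $F_{n,\vec{k},\vec{l}}$: a single inflated collection does not remember $X_{\vec{k}}$, only its closure $G_{n,\vec{k},\vec{l}}(X_{\vec{k}})$. The point of the $H$-iteration is that the defect $H_{n,\vec{k},\vec{l}}(X_{\vec{k}})=G_{n,\vec{k},\vec{l}}(X_{\vec{k}})\setminus X_{\vec{k}}$ lives in the same product and can be stored in a further slot, and Lemma~\ref{s010} is precisely what forces this recursion to terminate after finitely many steps, so that only countably many slots are ever needed and the reconstruction bottoms out. Checking that the signature assignment can be made injective with $k_i\leqslant l_i$ while keeping the slots disjoint, and verifying the boundary cases (such as $\vec{k}=\vec{\varnothing}$, where a slot may be empty), are the routine points I would leave to the write-up.
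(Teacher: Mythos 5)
Your overall architecture is the paper's: the easy direction via $[\fin(\mathfrak{a})]^n\leqslant^\ast\fin(\mathfrak{a})^n$, and for the hard direction an injection $\Phi$ on power sets built by slicing $X$ into the pieces $X_{\vec{k}}$, encoding the pair $(\vec{k},m)$ into a strictly increasing size-signature so that inflated tuples can be stored as $n$-element sets in pairwise disjoint slots, storing the iterated defects for $m\leqslant k_1+\dots+k_n$, and using Lemma~\ref{s010} for termination and Fact~\ref{s011} for reconstruction. There is, however, one concrete flaw in your reconstruction step. You let the signature $\vec{l}=\vec{l}^{\,\vec{k},m}$ depend on $m$ (it must, since it is what encodes $m$), and you then iterate the operator $H_{n,\vec{k},\vec{l}}$ with that same $m$-dependent target. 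The identity of Fact~\ref{s011}\ref{s011g} driving your downward recursion,
\[
H_{n,\vec{k},\vec{l}}^{(m)}(X_{\vec{k}})=G_{n,\vec{k},\vec{l}}(H_{n,\vec{k},\vec{l}}^{(m)}(X_{\vec{k}}))\setminus H_{n,\vec{k},\vec{l}}^{(m+1)}(X_{\vec{k}}),
\]
requires the set $H_{n,\vec{k},\vec{l}}^{(m+1)}(X_{\vec{k}})$ for the \emph{same} target $\vec{l}=\vec{l}^{\,\vec{k},m}$, whereas what your slot $(\vec{k},m+1)$ determines is $H_{n,\vec{k},\vec{l}'}^{(m+1)}(X_{\vec{k}})$ for the \emph{different} target $\vec{l}'=\vec{l}^{\,\vec{k},m+1}$. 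Since $G_{n,\vec{k},\vec{l}}$, and hence $H_{n,\vec{k},\vec{l}}$, depends on $\vec{l}$ (Fact~\ref{s011}\ref{s011h} gives only an inclusion between closures for comparable targets, not equality), the iterates stored in consecutive slots are iterates of different operators, and the recursion does not close up as written.

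The fix is exactly the decoupling the paper performs: run the $G/H$-iteration with a single target $t(\vec{k})$ depending only on $\vec{k}$ (the paper takes $t(\vec{k})=s(\vec{k},k_1+\dots+k_n)$, the coordinatewise largest of the encoding signatures), so that the sets $Y_{\vec{k},m}=G_{n,\vec{k},t(\vec{k})}(H_{n,\vec{k},t(\vec{k})}^{(m)}(X_{\vec{k}}))$ for varying $m$ all come from one fixed operator, and use the $m$-dependent signature $s(\vec{k},m)$ only in the final inflation $F_{n,\vec{k},s(\vec{k},m)}$ that places $Y_{\vec{k},m}$ into its slot. This costs one extra observation that your proposal does not mention, namely Fact~\ref{s011}\ref{s011h}: since $s(\vec{k},m)\sqsubseteq t(\vec{k})$ coordinatewise, a $G_{n,\vec{k},t(\vec{k})}$-closed set is also $G_{n,\vec{k},s(\vec{k},m)}$-closed, so $F_{n,\vec{k},s(\vec{k},m)}$ is still injective on the family of sets being stored and $Y_{\vec{k},m}$ is recoverable from its slot. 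With that adjustment your telescoping reconstruction of $X_{\vec{k}}$ from the $Y_{\vec{k},m}$ goes through, and the rest of your proposal is sound.
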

\begin{proof}
Let $A$ be an infinite set such that $|A|=\mathfrak{a}$.
The case $n=0$ is obvious. So assume that $n$ is a non-zero natural number.
For all natural numbers $k_1,\dots,k_n,m$, let $s(\vec{k},m)$ be the finite sequence
\[
\langle p_1^{k_1}\cdots p_n^{k_n}p_{n+1}^mp_{n+2}^i\rangle_{1\leqslant i\leqslant n}
\]
where $p_j$ is the $j$-th prime number, and let $t(\vec{k})=s(\vec{k},k_1+\dots+k_n)$.

For all $X\subseteq\fin(A)^n$ and all natural numbers $k_1,\dots,k_n,m$, we define
\begin{align*}
X_{\vec{k}}   & =X\cap([A]^{k_1}\times\dots\times[A]^{k_n});\\
Y_{\vec{k},m} & =G_{n,\vec{k},t(\vec{k})}(H_{n,\vec{k},t(\vec{k})}^{(m)}(X_{\vec{k}}));\\
Z_{\vec{k},m} & =F_{n,\vec{k},s(\vec{k},m)}(Y_{\vec{k},m}).
\end{align*}
Notice that for any finite sequence $\vec{x}=\langle x_1,\dots,x_n\rangle$, $\ran(\vec{x})=\{x_1,\dots,x_n\}$.
Now, let $\Phi$ be the function defined on $\wp(\fin(A)^n)$ given by
\[
\Phi(X)=\bigl\{\ran(\vec{y})\bigm|\exists k_1,\dots,k_n,m\in\omega\,\bigl(m\leqslant k_1+\dots+k_n\text{ and }\vec{y}\in Z_{\vec{k},m}\bigr)\bigr\}.
\]
We claim that $\Phi$ is an injection from $\wp(\fin(A)^n)$ into $\wp([\fin(A)]^n)$.

Let $X\subseteq\fin(A)^n$. For all $\vec{y}=\langle y_1,\dots,y_n\rangle\in Z_{\vec{k},m}$,
it is easy to see that $|y_i|=p_1^{k_1}\cdots p_n^{k_n}p_{n+1}^mp_{n+2}^i$ for any $i=1,\dots,n$,
and thus $|y_1|<\dots<|y_n|$, which implies that $\ran(\vec{y})\in[\fin(A)]^n$.
Hence $\Phi(X)\subseteq[\fin(A)]^n$. Moreover, $X$ is uniquely determined by $\Phi(X)$ in the following way:

First, for all natural numbers $k_1,\dots,k_n,m$ such that $m\leqslant k_1+\dots+k_n$,
$Z_{\vec{k},m}$ is uniquely determined by $\Phi(X)$:
\[
Z_{\vec{k},m}=\bigl\{\vec{y}\in[A]^{l_1}\times\dots\times[A]^{l_n}\bigm|\ran(\vec{y})\in\Phi(X)\bigr\},
\]
where $l_i=p_1^{k_1}\cdots p_n^{k_n}p_{n+1}^mp_{n+2}^i$ for any $i=1,\dots,n$.

Then, for all natural numbers $k_1,\dots,k_n,m$ such that $m\leqslant k_1+\dots+k_n$,
by Fact~\ref{s011}\ref{s011d}\ref{s011f}\ref{s011h},
$Y_{\vec{k},m}$ is the unique subset of $[A]^{k_1}\times\dots\times[A]^{k_n}$ such that
$G_{n,\vec{k},t(\vec{k})\,}(Y_{\vec{k},m})=Y_{\vec{k},m}$ and $F_{n,\vec{k},s(\vec{k},m)}(Y_{\vec{k},m})=Z_{\vec{k},m}$,
which implies that $Y_{\vec{k},m}$ is uniquely determined by $\Phi(X)$.

Now, for all natural numbers $k_1,\dots,k_n$, it follows from Fact~\ref{s011}\ref{s011g} and Lemma~\ref{s010} that
\[
X_{\vec{k}}=Y_{\vec{k},0}\setminus(Y_{\vec{k},1}\setminus(\cdots(Y_{\vec{k},k_1+\dots+k_n-1}\setminus Y_{\vec{k},k_1+\dots+k_n})\cdots)),
\]
and thus $X_{\vec{k}}$ is uniquely determined by $\Phi(X)$.

Finally, since
\[
X=\bigcup_{k_1,\dots,k_n\in\omega}X_{\vec{k}},
\]
it follows that $X$ is also uniquely determined by $\Phi(X)$.

Hence, $\Phi$ is an injection from $\wp(\fin(A)^n)$ into $\wp([\fin(A)]^n)$,
and thus $2^{\fin(\mathfrak{a})^n}\leqslant2^{[\fin(\mathfrak{a})]^n}$.
Since $[\fin(\mathfrak{a})]^n\leqslant^\ast\fin(\mathfrak{a})^n$,
it follows that $2^{[\fin(\mathfrak{a})]^n}\leqslant2^{\fin(\mathfrak{a})^n}$,
and thus $2^{\fin(\mathfrak{a})^n}=2^{[\fin(\mathfrak{a})]^n}$ follows from the Cantor--Bernstein theorem.
\end{proof}

We still have to prove Lemma~\ref{s010}.
To this end, we need the following version of Ramsey's theorem, whose proof will be omitted.

\begin{lemma}\label{s013}
Let $n$ be a non-zero natural number. There exists a function $R$
defined on $\omega^n\times(\omega\setminus\{0\})\times\omega$ such that
for all natural numbers $j_1,\dots,j_n,c,r$ with $c>0$ and all finite sets $S_1,\dots,S_n,Y_1,\dots,Y_c$,
if $|S_i|\geqslant R(j_1,\dots,j_n,c,r)$ for any $i=1,\dots,n$ and
\[
[S_1]^{j_1}\times\dots\times[S_n]^{j_n}=Y_1\cup\dots\cup Y_c,
\]
then for each $i=1,\dots,n$ there exist a $T_i\in[S_i]^r$ such that
\[
[T_1]^{j_1}\times\dots\times[T_n]^{j_n}\subseteq Y_d
\]
for some $d=1,\dots,c$.
\end{lemma}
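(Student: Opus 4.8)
The plan is to prove the statement by induction on $n$, reducing the $n$-dimensional product form of the finite Ramsey theorem to the classical (one-dimensional) finite Ramsey theorem, which I take as the base case. Recall that the ordinary finite Ramsey theorem provides, for all $j,c,r\in\omega$ with $c>0$, a least number $R_1(j,c,r)$ such that every colouring of $[S]^j$ with $c$ colours, where $|S|\geqslant R_1(j,c,r)$, admits a $T\in[S]^r$ with $[T]^j$ monochromatic; since this concerns only finite sets it is provable in $\mathsf{ZF}$. This is exactly the case $n=1$ of the lemma (given the cover $[S_1]^{j_1}=Y_1\cup\dots\cup Y_c$, regard it as the colouring assigning to each tuple the least index $d$ with the tuple in $Y_d$), so it remains to pass from $n-1$ to $n$.

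For the inductive step, suppose the $(n-1)$-ary function $R_{n-1}$ has already been constructed. Given $j_1,\dots,j_n,c,r$, I would set $s=R_{n-1}(j_2,\dots,j_n,c,r)$ and define
\[
R_n(j_1,\dots,j_n,c,r)=\max\Bigl\{s,\ R_1\bigl(j_1,\,c^{\binom{s}{j_2}\cdots\binom{s}{j_n}},\,r\bigr)\Bigr\}.
\]
To check this works, take finite sets $S_1,\dots,S_n$ with each $|S_i|\geqslant R_n(j_1,\dots,j_n,c,r)$ and let $\chi$ be the colouring of $[S_1]^{j_1}\times\dots\times[S_n]^{j_n}$ sending each tuple to the least $d$ with the tuple in $Y_d$. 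I fix subsets $S_i'\subseteq S_i$ with $|S_i'|=s$ for $i=2,\dots,n$, put $G=[S_2']^{j_2}\times\dots\times[S_n']^{j_n}$, and \emph{fold} the trailing coordinates into a single colour: define $\Gamma$ on $[S_1]^{j_1}$ by letting $\Gamma(A)$ be the function on $G$ sending $(B_2,\dots,B_n)$ to $\chi(A,B_2,\dots,B_n)$. Since $|G|=\binom{s}{j_2}\cdots\binom{s}{j_n}$, the map $\Gamma$ takes at most $c^{\binom{s}{j_2}\cdots\binom{s}{j_n}}$ values, so the choice of $R_1$ above yields a $T_1\in[S_1]^r$ on which $\Gamma$ is constant. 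This constancy says precisely that $\chi(A,B_2,\dots,B_n)$ is independent of $A\in[T_1]^{j_1}$, so $\chi'(B_2,\dots,B_n)=\chi(A,B_2,\dots,B_n)$ is a well-defined colouring of $G$ with $c$ colours. Applying $R_{n-1}$ to $\chi'$ (legitimate because each $|S_i'|=s=R_{n-1}(j_2,\dots,j_n,c,r)$) produces $T_2,\dots,T_n$ of size $r$ with $[T_2]^{j_2}\times\dots\times[T_n]^{j_n}$ monochromatic for $\chi'$; combining the two homogenizations makes $\chi$ constant on $[T_1]^{j_1}\times\dots\times[T_n]^{j_n}$, so this grid lies inside a single $Y_d$.

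The step I expect to be delicate is the folding in the second paragraph: homogenizing the leading coordinate forces me to treat each assignment $(B_2,\dots,B_n)\mapsto\chi(A,B_2,\dots,B_n)$ as one colour, and for the classical Ramsey theorem to apply I must keep the \emph{number} of such colours finite and bounded independently of the (possibly enormous) sets $S_i$. This is precisely why the trailing coordinates must be shrunk to the induction-hypothesis size $s$ \emph{before} the leading coordinate is homogenized, so that the colour count is the fixed quantity $c^{\binom{s}{j_2}\cdots\binom{s}{j_n}}$; getting the order of operations right is the crux. Finally, I would note that the whole construction is explicit: the recursion above defines $R_n$ outright from $R_1$ and $R_{n-1}$, and each homogeneous set can be located by a definable search through finitely many finite sets, so no appeal to the axiom of choice is needed and the argument goes through in $\mathsf{ZF}$.
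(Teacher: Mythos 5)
Your proof is correct. The paper in fact states this lemma \emph{without} proof (``whose proof will be omitted''), so there is no argument of the author's to diverge from; what you give --- induction on $n$, with the trailing coordinates first shrunk to the induction-hypothesis size $s=R_{n-1}(j_2,\dots,j_n,c,r)$ so that folding them into the colour yields the fixed bound $c^{\binom{s}{j_2}\cdots\binom{s}{j_n}}$, then homogenizing the leading coordinate by the classical finite Ramsey theorem --- is precisely the standard proof of the product Ramsey theorem that the author is implicitly invoking, and you correctly identify the order-of-operations issue as the crux. The only unaddressed point is the degenerate case $r<j_1$, where $[T_1]^{j_1}=\varnothing$ and $\chi'$ is not literally well-defined; this is harmless, since the desired conclusion is then vacuous for arbitrary $T_2,\dots,T_n$ of size $r$, which exist because any valid $R_1(j_1,c',r)$ and $R_{n-1}(\dots,r)$ are necessarily $\geqslant r$.
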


\begin{proof}[Proof of Lemma~\textnormal{\ref{s010}}]
Let $A$ be an arbitrary infinite set and $n$ a non-zero natural number.
Let $k_1,\dots,k_n$ and $l_1,\dots,l_n$ be natural numbers such that $k_i\leqslant l_i$ for any $i=1,\dots,n$.
Since in this proof the natural numbers $n,k_1,\dots,k_n,l_1,\dots,l_n$ are fixed,
we shall omit the subscripts in $F_{n,\vec{k},\vec{l}\,}$,
$G_{n,\vec{k},\vec{l}\,}$ and $H_{n,\vec{k},\vec{l}\,}$ for convenience.

\pagebreak

Consider the following two formulae:
\begin{description}
  \item[$\phi(X,\vec{x},\vec{y})$] $X\subseteq[A]^{k_1}\times\dots\times[A]^{k_n}$ and
        $\vec{x},\vec{y}\in\fin(A)^n$ are such that $|x_i|\leqslant k_i$ for any $i=1,\dots,n$,
        such that $\vec{x}\sqcap\vec{y}=\vec{\varnothing}$,
        and such that $\vec{x}\sqcup\vec{z}\in X$ for any $\vec{z}\in[y_1]^{k_1-|x_1|}\times\dots\times[y_n]^{k_n-|x_n|}$.
  \item[$\psi(X,\vec{x})$] For all $r\in\omega$ there exists a $\vec{y}\in([A]^r)^n$ such that $\phi(X,\vec{x},\vec{y})$.
\end{description}

We claim that for all $X\subseteq[A]^{k_1}\times\dots\times[A]^{k_n}$ and all $\vec{x}\in\fin(A)^n$,
\begin{equation}\label{s014}
\text{if $\psi(H(X),\vec{x})$ then $\psi(X,\vec{u})$ for some $\vec{u}\sqsubset\vec{x}$.}
\end{equation}
Once we prove \eqref{s014}, we finish the proof of Lemma~\ref{s010} as follows.
Assume towards a contradiction that $X\subseteq[A]^{k_1}\times\dots\times[A]^{k_n}$
and there exists an $\vec{x}\in H^{(k_1+\dots+k_n+1)}(X)$.
It is obvious that $\psi(H^{(k_1+\dots+k_n+1)}(X),\vec{x})$.
Now, by repeatedly applying \eqref{s014}, we get a descending sequence
\[
\vec{x}\sqsupset\vec{u}_1\sqsupset\dots\sqsupset\vec{u}_{k_1+\dots+k_n+1},
\]
which is absurd, since $\vec{x}\in[A]^{k_1}\times\dots\times[A]^{k_n}$.

Now, let us prove \eqref{s014}. Let $X\subseteq[A]^{k_1}\times\dots\times[A]^{k_n}$
and let $\vec{x}\in\fin(A)^n$ be such that $\psi(H(X),\vec{x})$. It suffices to prove that
\begin{equation}\label{s015}
\forall r\geqslant l_1+\dots+l_n\,\exists\vec{u}\sqsubset\vec{x}\,\exists\vec{y}\in([A]^r)^n\,\phi(X,\vec{u},\vec{y}),
\end{equation}
since then there must be a $\vec{u}\sqsubset\vec{x}$ such that for infinitely many $r\in\omega$
there exists a $\vec{y}\in([A]^r)^n$ such that $\phi(X,\vec{u},\vec{y})$,
and for this $\vec{u}$ we have $\psi(X,\vec{u})$.

We prove \eqref{s015} as follows. Let $r\geqslant l_1+\dots+l_n$.
Let $R$ be the function whose existence is asserted by Lemma~\ref{s013}. We define
\begin{align*}
r'  & =\max\{R(j_1,\dots,j_n,2,r)\mid j_i\leqslant k_i\text{ for any }i=1,\dots,n\};\\
r'' & =R(l_1-|x_1|,\dots,l_n-|x_n|,2^{|x_1|+\dots+|x_n|},r').
\end{align*}
Since $\psi(H(X),\vec{x})$, we can find an $\vec{S}=\langle S_1,\dots,S_n\rangle\in([A]^{r''})^n$ such that $\phi(H(X),\vec{x},\vec{S})$.
Notice that $\vec{x}\sqcap\vec{S}=\vec{\varnothing}$. For each $\vec{u}\sqsubseteq\vec{x}$, let
\[
Y_{\vec{u}}=\bigl\{\vec{w}\in[S_1]^{l_1-|x_1|}\times\dots\times[S_n]^{l_n-|x_n|}\bigm|
\vec{u}\sqcup\vec{v}\in X\text{ for some }\vec{v}\sqsubseteq\vec{w}\bigr\}.
\]

We claim that
\begin{equation}\label{s016}
[S_1]^{l_1-|x_1|}\times\dots\times[S_n]^{l_n-|x_n|}=\textstyle\bigcup\{Y_{\vec{u}}\mid\vec{u}\sqsubseteq\vec{x}\}.
\end{equation}
Let $\vec{w}\in[S_1]^{l_1-|x_1|}\times\dots\times[S_n]^{l_n-|x_n|}$.
Take a $\vec{z}\in[S_1]^{k_1-|x_1|}\times\dots\times[S_n]^{k_n-|x_n|}$ such that $\vec{z}\sqsubseteq\vec{w}$.
Then it follows from $\phi(H(X),\vec{x},\vec{S})$ that $\vec{x}\sqcup\vec{z}\in H(X)$, and thus $\vec{x}\sqcup\vec{z}\in G(X)$.
Since $\vec{x}\sqcup\vec{z}\sqsubseteq\vec{x}\sqcup\vec{w}\in[A]^{l_1}\times\dots\times[A]^{l_n}$,
it follows that $\vec{x}\sqcup\vec{w}\in F(X)$, and hence $\vec{a}\sqsubseteq\vec{x}\sqcup\vec{w}$ for some $\vec{a}\in X$.
Now, if we take $\vec{u}=\vec{a}\sqcap\vec{x}$ and $\vec{v}=\vec{a}\sqcap\vec{w}$,
then we have $\vec{u}\sqcup\vec{v}=\vec{a}\in X$ and hence $\vec{w}\in Y_{\vec{u}}$.

By \eqref{s016} and Lemma~\ref{s013}, we can find a $\vec{u}=\langle u_1,\dots,u_n\rangle\sqsubseteq\vec{x}$
such that for each $i=1,\dots,n$ there exist a $T_i\in[S_i]^{r'}$ such that
\begin{equation}\label{s019}
[T_1]^{l_1-|x_1|}\times\dots\times[T_n]^{l_n-|x_n|}\subseteq Y_{\vec{u}}.
\end{equation}
Let
\[
Z=\bigl\{\vec{v}\in[T_1]^{k_1-|u_1|}\times\dots\times[T_n]^{k_n-|u_n|}\bigm|\vec{u}\sqcup\vec{v}\in X\bigr\}.
\]
Since $|T_i|=r'\geqslant R(k_1-|u_1|,\dots,k_n-|u_n|,2,r)$ for any $i=1,\dots,n$,
it follows from Lemma~\ref{s013} that we can find a $\vec{y}=\langle y_1,\dots,y_n\rangle$
such that $y_i\in[T_i]^r$ for any $i=1,\dots,n$, and such that either
\begin{equation}\label{s017}
[y_1]^{k_1-|u_1|}\times\dots\times[y_n]^{k_n-|u_n|}\subseteq Z
\end{equation}
or
\begin{equation}\label{s018}
([y_1]^{k_1-|u_1|}\times\dots\times[y_n]^{k_n-|u_n|})\cap Z=\varnothing.
\end{equation}

We claim that \eqref{s018} is impossible. Since $|y_i|=r\geqslant l_i\geqslant l_i-|x_i|$ for any $i=1,\dots,n$,
there is a $\vec{w}\in[y_1]^{l_1-|x_1|}\times\dots\times[y_n]^{l_n-|x_n|}$,
and thus it follows from \eqref{s019} that $\vec{w}\in Y_{\vec{u}}$,
which implies that $\vec{u}\sqcup\vec{v}\in X$ for some $\vec{v}\sqsubseteq\vec{w}$
and such a $\vec{v}$ is in $([y_1]^{k_1-|u_1|}\times\dots\times[y_n]^{k_n-|u_n|})\cap Z$.
Therefore \eqref{s017} must hold, from which $\phi(X,\vec{u},\vec{y})$ follows.

It remains to show that $\vec{u}\neq\vec{x}$.
Since $\phi(H(X),\vec{x},\vec{S})$ and $\vec{y}\sqsubseteq\vec{S}$, it follows that $\phi(H(X),\vec{x},\vec{y})$.
If $\vec{u}=\vec{x}$, then we also have $\phi(X,\vec{x},\vec{y})$, which is impossible:
Since $|y_i|=r\geqslant l_i\geqslant k_i\geqslant k_i-|x_i|$ for any $i=1,\dots,n$,
there is a $\vec{z}\in[y_1]^{k_1-|x_1|}\times\dots\times[y_n]^{k_n-|x_n|}$,
and for such a $\vec{z}$, we cannot have both $\vec{x}\sqcup\vec{z}\in H(X)$ and $\vec{x}\sqcup\vec{z}\in X$.
\end{proof}

\section{Consistency results}\label{s009}
In this section, we establish some consistency results by the method of permutation models.
Permutation models are not models of $\mathsf{ZF}$;
they are models of $\mathsf{ZFA}$ (the Zermelo-Fraenkel set theory with atoms).
Nevertheless, they indirectly give, via the Jech--Sochor theorem (cf.~\cite[Theorem~17.2]{Halbeisen2017}),
models of $\mathsf{ZF}$.

For our purpose, we only consider the basic Fraenkel model $\mathcal{V}_\mathrm{F}$ (cf.~\cite[pp.~195--196]{Halbeisen2017}).
The set $A$ of atoms of $\mathcal{V}_\mathrm{F}$ is denumerable,
and $x\in\mathcal{V}_\mathrm{F}$ if and only if $x\subseteq\mathcal{V}_\mathrm{F}$
and $x$ has a \emph{finite support}, that is, a set $B\in\fin(A)$ such that
every permutation of $A$ fixing $B$ pointwise also fixes $x$.

\begin{lemma}\label{s020}
Let $A$ be the set of atoms of $\mathcal{V}_\mathrm{F}$ and let $\mathfrak{a}=|A|$.
In $\mathcal{V}_\mathrm{F}$,
\[
2^{\fin(\mathfrak{a})}<2^{\fin(\mathfrak{a})^2}<2^{\fin(\mathfrak{a})^3}<\dots<2^{\fin(\fin(\mathfrak{a}))}.
\]
\end{lemma}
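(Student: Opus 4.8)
The plan is to work entirely inside the basic Fraenkel model $\mathcal{V}_\mathrm{F}$ and exploit the symmetry structure that finite supports provide. I must establish two families of inequalities: the weak ones $2^{\fin(\mathfrak{a})^m}\leqslant 2^{\fin(\mathfrak{a})^{m+1}}$ (and $2^{\fin(\mathfrak{a})^m}\leqslant 2^{\fin(\fin(\mathfrak{a}))}$), which follow from general $\mathsf{ZF}$ facts since $\fin(\mathfrak{a})^m\leqslant^\ast\fin(\mathfrak{a})^{m+1}\leqslant^\ast\fin(\fin(\mathfrak{a}))$ gives the corresponding monotonicity of powers; and the strict \emph{separations}, which are the real content. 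For the strictness I need, for each $m$, to exhibit why $2^{\fin(\mathfrak{a})^{m+1}}\nleqslant 2^{\fin(\mathfrak{a})^m}$, i.e.\ that there is no injection from $\wp(\fin(A)^{m+1})$ into $\wp(\fin(A)^m)$ inside the model.

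First I would set up the combinatorial invariant attached to a cardinal in $\mathcal{V}_\mathrm{F}$. The natural gauge is how large a set must be before an equivalence/symmetry argument forces collapse: any element $x\in\mathcal{V}_\mathrm{F}$ has a finite support $B$, and two tuples of atoms that agree on $B$ and are related by a support-fixing permutation must be identified by any symmetric function. The key quantitative lemma I expect to prove is that $\wp(\fin(A)^m)$ admits, via supports, an injection into something controlled by $\fin(A)^{m}$-many orbits, whereas $\wp(\fin(A)^{m+1})$ genuinely needs $(m+1)$-fold products to separate its orbits. Concretely I would count, for a given finite support $B$ of size $s$, the number of orbits of $m$-tuples of finite subsets of $A$ under the pointwise stabilizer of $B$: this is a finite number growing with $m$, and I would show that an injection $\wp(\fin(A)^{m+1})\hookrightarrow\wp(\fin(A)^m)$ would have to compress strictly more orbit-information into strictly fewer coordinates, contradicting an orbit-counting or a Ramsey-type pigeonhole bound.

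The cleanest route is probably to reduce the strict inequalities to an injectivity-obstruction statement and then invoke the machinery already developed: Lemma~\ref{s008} gives $\mathfrak{a}^{\underline{2^n}}\leqslant\fin(\mathfrak{a})^{n+1}$, which is exactly the bridge between injective sequences of atoms and powers of $\fin(\mathfrak{a})$, and this is surely why that lemma was flagged ``will be used in Section~\ref{s009}.'' So I would argue that $2^{\fin(\mathfrak{a})^{m+1}}$ can encode a family of functions indexed by injections of a set of size $2^{m}$ into $A$, a family too rich to embed into $\wp(\fin(A)^m)$ because any symmetric subset of $\fin(A)^m$ is determined by finitely many orbit-parameters fixed by its support. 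Formally I would take a candidate injection $\Psi\colon\wp(\fin(A)^{m+1})\to\wp(\fin(A)^m)$ in the model, fix a support $B$ for $\Psi$, and derive a contradiction by producing two distinct subsets of $\fin(A)^{m+1}$ that are $\mathrm{fix}(B)$-conjugate in a way $\Psi$ cannot distinguish, using the extra degree of freedom in the $(m+1)$-st coordinate that no finite support and no $m$-fold product can track.

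I expect the main obstacle to be precisely this last conjugacy construction: making rigorous the intuition that the $(m{+}1)$-st coordinate carries information invisible to any $m$-coordinate symmetric code. This requires carefully choosing atoms outside the support $B$ of $\Psi$ and a permutation $\pi\in\mathrm{fix}(B)$ that moves a distinguished $(m{+}1)$-tuple while fixing the $\Psi$-image, which forces $\Psi$ to agree on two different arguments. Managing the bookkeeping so that the moved configuration really escapes detection — i.e.\ that supports of the two test sets differ only outside $B$ and that the relevant orbits under $\mathrm{fix}(B)$ are genuinely distinct for $m{+}1$ but collapse for $m$ — is the delicate step, and I would likely lean on Lemma~\ref{s013} (the product Ramsey lemma) to guarantee the existence of large homogeneous atom-sets on which the permutation can act freely.
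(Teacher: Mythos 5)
Your reduction is the right one: the weak inequalities are routine, the strict ones reduce via Lemma~\ref{s008} to showing that $\wp(A^{\underline{2^n}})$ (or, in your variant, $\wp(\fin(A)^{n+1})$) does not inject into $\wp(\fin(A)^n)$ in the model, and the contradiction must come from a support $B$ of the putative injection together with a permutation fixing $B$ that moves the argument but not the image. But the proof stops exactly where the actual work begins: you acknowledge that ``producing two distinct subsets that are $\mathrm{fix}(B)$-conjugate in a way $\Psi$ cannot distinguish'' is the delicate step, and you do not supply the construction. The missing idea is concrete and specific. Take $C\in[A\setminus B]^{2^n+1}$ and $u\in C^{\underline{2^n}}$; since $\mathrm{Sym}(C)$ acts simply transitively on $C^{\underline{2^n}}$, the even and odd permutations of $C$ give a genuine two-block partition $\{\mathcal{E},\mathcal{O}\}$ of $C^{\underline{2^n}}$ into orbits of the alternating group, with every odd permutation swapping the blocks. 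The punchline is a pigeonhole: any $t\in\fin(A)^n$ partitions $C$ into at most $2^n$ classes according to membership in $t(0),\dots,t(n-1)$, and since $|C|=2^n+1$ some transposition of $C$ fixes $t$; composing an odd permutation with that transposition shows every odd permutation maps $f(\mathcal{E})$ into $f(\mathcal{E})$, whence $f(\mathcal{O})=f(\mathcal{E})$, killing injectivity. This is why one must pass through $A^{\underline{2^n}}$ rather than attack $\wp(\fin(A)^{n+1})$ directly: on injective sequences the parity of the conjugating permutation is well defined (trivial stabilizers), whereas on tuples of finite sets it is not, so your ``extra coordinate'' test sets have no obvious analogue of the $\mathcal{E}/\mathcal{O}$ split.

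Two smaller corrections. First, the quantitative heart is not a Ramsey argument: Lemma~\ref{s013} plays no role in this section, and invoking it here is a wrong turn --- the only counting needed is $2^n+1>2^n$. Second, your ``orbit-counting'' heuristic (that $\wp(\fin(A)^m)$ carries fewer orbit-parameters than $\wp(\fin(A)^{m+1})$) is pointing at the right phenomenon but is not by itself a proof; the paper makes it precise exactly through the bound $|C/{\sim_t}|\leqslant2^n$ for $t\in\fin(A)^n$ against the $(2^n+1)$-element set $C$ needed to support a point of $A^{\underline{2^n}}$. Finally, note that the last link in the chain, $2^{\fin(\mathfrak{a})^n}\leqslant2^{\fin(\fin(\mathfrak{a}))}$, is obtained in the paper by routing through Theorem~\ref{s012} (via $[\fin(\mathfrak{a})]^n\leqslant\fin(\fin(\mathfrak{a}))$), a step your sketch leaves implicit.
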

\begin{proof}
Let $n\in\omega$. We claim that in $\mathcal{V}_\mathrm{F}$,
\begin{equation}\label{s021}
2^{\mathfrak{a}^{\underline{2^n}}}\nleqslant2^{\fin(\mathfrak{a})^n}.
\end{equation}
Assume towards a contradiction that there exists an injection $f\in\mathcal{V}_\mathrm{F}$
from $\wp(A^{\underline{2^n}})$ into $\wp(\fin(A)^n)$. Let $B$ be a finite support of $f$.
Take an arbitrary $C\in[A\setminus B]^{2^n+1}$ and a $u\in C^{\underline{2^n}}$.
We say that a permutation $\pi$ of $A$ is \emph{even} (\emph{odd})
if $\pi$ moves only elements of $C$ and can be written as a product of an even (odd) number of transpositions.
It is well-known that a permutation of $A$ cannot be both even and odd. Now, let
\[
\mathcal{E}=\{\pi(u)\mid\pi\text{ is an even permutation of }A\},
\]
and let
\[
\mathcal{O}=\{\sigma(u)\mid\sigma\text{ is an odd permutation of }A\}.
\]
Clearly, $\{\mathcal{E},\mathcal{O}\}$ is a partition of $C^{\underline{2^n}}$,
for all even permutations $\pi$ of $A$ we have $\pi(\mathcal{E})=\mathcal{E}$,
and for all odd permutations $\sigma$ of $A$ we have $\sigma(\mathcal{E})=\mathcal{O}$.
Now, let us consider $f(\mathcal{E})$.
For each $t\in f(\mathcal{E})$, let $\sim_t$ be the equivalence relation on $C$ such that for all $a,b\in C$,
\[
a\sim_tb\quad\text{if and only if}\quad\forall k<n\,\bigl(a\in t(k)\leftrightarrow b\in t(k)\bigr).
\]
For all even permutations $\pi$ of $A$, since $B$ is a finite support of $f$,
it follows that $\pi(f)=f$, and thus $\pi(f(\mathcal{E}))=f(\mathcal{E})$.
For all odd permutations $\sigma$ of $A$ and all $t\in f(\mathcal{E})$,
since $|C/{\sim_t}|\leqslant2^n$ and $|C|=2^n+1$, there are $a,b\in C$ such that $a\neq b$ and $a\sim_tb$,
and therefore the transposition $\tau$ that swaps $a$ and $b$ fixes $t$,
which implies that $\sigma(t)=(\sigma\circ\tau)(t)\in f(\mathcal{E})$ since $\sigma\circ\tau$ is even.
Hence, for all odd permutations $\sigma$ of $A$, $\sigma(f(\mathcal{E}))=f(\mathcal{E})$,
which implies that $f(\mathcal{O})=f(\sigma(\mathcal{E}))=\sigma(f(\mathcal{E}))=f(\mathcal{E})$,
contradicting the injectivity of $f$.

Now, it follows from Lemma~\ref{s008} that $\mathfrak{a}^{\underline{2^n}}\leqslant\fin(\mathfrak{a})^{n+1}$,
and therefore $2^{\mathfrak{a}^{\underline{2^n}}}\leqslant2^{\fin(\mathfrak{a})^{n+1}}$,
which implies that $2^{\fin(\mathfrak{a})^n}<2^{\fin(\mathfrak{a})^{n+1}}$ by \eqref{s021}.
It follows from Theorem~\ref{s012} that $2^{\fin(\mathfrak{a})^n}=2^{[\fin(\mathfrak{a})]^n}\leqslant 2^{\fin(\fin(\mathfrak{a}))}$. Hence
\[
2^{\fin(\mathfrak{a})}<2^{\fin(\mathfrak{a})^2}<2^{\fin(\mathfrak{a})^3}<\dots<2^{\fin(\fin(\mathfrak{a}))}.\qedhere
\]
\end{proof}

Now the following proposition immediately follows from Lemma~\ref{s020} and the Jech--Sochor theorem.

\begin{proposition}\label{s022}
The following statement is consistent with $\mathsf{ZF}$:
there is an infinite cardinal $\mathfrak{a}$ such that
\[
2^{\fin(\mathfrak{a})}<2^{\fin(\mathfrak{a})^2}<2^{\fin(\mathfrak{a})^3}<\dots<2^{\fin(\fin(\mathfrak{a}))}.
\]
\end{proposition}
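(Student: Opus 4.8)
The plan is to derive the proposition by lifting the conclusion of Lemma~\ref{s020} from the permutation model $\mathcal{V}_\mathrm{F}$, which satisfies $\mathsf{ZFA}$ rather than $\mathsf{ZF}$, to a genuine model of $\mathsf{ZF}$ by means of the Jech--Sochor theorem (cf.\ \cite[Theorem~17.2]{Halbeisen2017}). Since Lemma~\ref{s020} already exhibits in $\mathcal{V}_\mathrm{F}$ an infinite cardinal $\mathfrak{a}=|A|$ for which the whole chain $2^{\fin(\mathfrak{a})}<2^{\fin(\mathfrak{a})^2}<\dots<2^{\fin(\fin(\mathfrak{a}))}$ holds, all of the substantive work has already been carried out; what remains is only to check that this assertion falls within the scope of the Jech--Sochor transfer and to read off the resulting $\mathsf{ZF}$ witness.

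First I would isolate the content that actually has to be transferred. Because the non-strict comparisons $2^{\fin(\mathfrak{a})^n}\leqslant 2^{\fin(\fin(\mathfrak{a}))}$ are theorems of $\mathsf{ZF}$ (they follow from Theorem~\ref{s012} together with $[\fin(\mathfrak{a})]^n\leqslant\fin(\fin(\mathfrak{a}))$), they will hold automatically, of every infinite cardinal, in whatever $\mathsf{ZF}$ model we produce, and so need not be transferred at all. The genuinely model-dependent content is the family of negative statements from \eqref{s021}, namely that for each $n$ there is no injection from $\wp(A^{\underline{2^n}})$ into $\wp(\fin(A)^n)$; together with Lemma~\ref{s008} these yield the strict inequalities $2^{\fin(\mathfrak{a})^n}<2^{\fin(\mathfrak{a})^{n+1}}$. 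Each such negative statement asserts the non-existence of a single object, an injection, all of whose constituents are built from $A$ by finitely many applications of the power-set, product and $\fin$ operations.

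This is precisely the shape to which the Jech--Sochor theorem applies. All the sets in play---$A^{\underline{2^n}}$, $\fin(A)^n$, their power sets, and any injection between two such power sets---lie in $\mathcal{P}^\alpha(A)$, the $\alpha$-th iterated power set above $A$, for a single fixed ordinal $\alpha$; indeed $\alpha=\omega$ already suffices, since each fixed $n$ requires only finitely many iterations of the power-set operation above $A$ and only countably many indices $n$ occur. Fixing such an $\alpha$, the Jech--Sochor theorem furnishes a model $\mathcal{V}'$ of $\mathsf{ZF}$ together with an $\in$-isomorphism carrying $\mathcal{P}^\alpha(A)$, as computed in $\mathcal{V}_\mathrm{F}$, onto the corresponding $\mathcal{P}^\alpha(\tilde{A})$ computed in $\mathcal{V}'$, for some infinite set $\tilde{A}\in\mathcal{V}'$. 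Since this isomorphism respects the power-set, product and $\fin$ operations and, crucially, transports injections between the relevant sets in both directions, the negative statements above hold of $\tilde{\mathfrak{a}}=|\tilde{A}|$ in $\mathcal{V}'$; combining them with the $\mathsf{ZF}$-provable non-strict comparisons shows that $\tilde{\mathfrak{a}}$ witnesses the desired chain in $\mathcal{V}'$, which is exactly the required consistency statement.

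The only point requiring real care---and hence the main obstacle, although it is entirely routine---is the verification that the assertion is boundable: that a single ordinal $\alpha$ simultaneously captures all of the sets and all of the (non-)existing injections for every $n$ at once, and that the Jech--Sochor embedding preserves the relation $\nleqslant$ among members of $\mathcal{P}^\alpha(A)$ in the needed direction. Both are standard features of the construction, so no difficulty arises beyond this bookkeeping.
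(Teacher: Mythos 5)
Your proposal is correct and follows exactly the paper's route: the paper derives Proposition~\ref{s022} immediately from Lemma~\ref{s020} via the Jech--Sochor theorem, which is precisely the transfer you describe. Your additional remarks on boundability and on which parts of the chain are already $\mathsf{ZF}$-theorems are accurate elaborations of the standard bookkeeping that the paper leaves implicit.
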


It is natural to wonder whether the conclusion of Theorem~\ref{s012}
can be strengthened to $\fin(\mathfrak{a})^n\leqslant^\ast[\fin(\mathfrak{a})]^n$.
We shall give a negative answer to this question.
The case $n=1$ of the following lemma is proved in \cite{Truss1974}.

\begin{lemma}\label{s023}
Let $A$ be the set of atoms of $\mathcal{V}_\mathrm{F}$.
In $\mathcal{V}_\mathrm{F}$, for every $n\in\omega$, $\fin(A)^n$ is dually Dedekind finite;
that is, every surjection from $\fin(A)^n$ onto $\fin(A)^n$ is injective.
\end{lemma}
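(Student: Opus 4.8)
The plan is to argue directly inside $\mathcal V_\mathrm F$, exploiting finite supports. Fix a surjection $f\colon\fin(A)^n\to\fin(A)^n$ and let $E\in\fin(A)$ be a support of $f$. The first step records the two standard support facts: for every $\vec s\in\fin(A)^n$ the value $f(\vec s)$ is supported by $E\cup\operatorname{supp}(\vec s)$ (where $\operatorname{supp}(\vec s)=\bigcup_{i}s_i$), and $f$ commutes with every permutation $\pi$ fixing $E$ pointwise, i.e. $f(\pi\vec s)=\pi f(\vec s)$. In particular, writing $\|\vec s\|=|\operatorname{supp}(\vec s)\setminus E|$ for the free-support size, one gets $\|f(\vec s)\|\le\|\vec s\|$, so $f$ never increases free support.

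Next I would classify the orbits of the pointwise stabilizer $G_E$ on $\fin(A)^n$: two tuples share an orbit exactly when they have the same intersection pattern with $E$ and, outside $E$, the same multiset of ``colours'', the colour of a free atom $a$ being $\{i:a\in s_i\}\in\wp(\{1,\dots,n\})\setminus\{\varnothing\}$. Thus an orbit is coded by a tuple of subsets of $E$ together with a colour-count function, its free-support size is the total colour count, and crucially for each fixed $j$ there are only finitely many orbits with $\|\cdot\|=j$. Since $f$ is $G_E$-equivariant it induces a surjection $\bar f$ on orbits that is nonincreasing in free-support size, and $f$ is injective iff $\bar f$ is injective and each $f{\upharpoonright}\mathcal O$ is injective. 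The heart of the local analysis is to pin down which orbits are reachable: using that $f(\vec s)$ must be fixed by the stabilizer of $\vec s$, one shows $f(\vec s)$ is obtained from $\vec s$ by arbitrarily resetting the $E$-part and sending each colour-class wholesale to a new colour in $\wp(\{1,\dots,n\})$---possibly merging several classes or deleting one (sending it to $\varnothing$). From this description $f{\upharpoonright}\mathcal O$ is injective precisely when no class is merged or deleted.

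It then remains to show that surjectivity forbids all merging and deleting, and this is the main obstacle. It is also where $n\ge 2$ genuinely exceeds the case $n=1$ of \cite{Truss1974}: for $n=1$ there is a single colour, so a non-trivial recolouring can only delete, dropping the orbit to free-support size $0$, and since each free-support level carries only $2^{|E|}$ orbits a level-by-level count shows that any deletion would leave some positive-level orbit uncovered. For $n\ge 2$ deletions and merges can land at intermediate levels, so I would instead run a global counting argument: restricting $f$ to the finite sets $\mathcal S_P=\{\vec s:\operatorname{supp}(\vec s)\subseteq E\cup P\}$ (on which $f$ acts, and on each orbit-slice $\mathcal O\cap\mathcal S_P$ of which $\operatorname{Sym}(P\setminus E)$ acts transitively and commutes with $f$), one expresses the slice sizes as explicit polynomials in $|P\setminus E|$ whose degrees are the free-support sizes and whose leading coefficients record the merging multiplicities. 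Tracking, for each orbit, the least free-support size $\kappa$ of a preimage, the aim is to prove $\kappa$ equals the free-support size---equivalently that every orbit has a support-preserving preimage---by showing that any merge or deletion propagates upward to an orbit that uses all $2^n-1$ colours with multiplicity one, which can be reached only by a colour-relabelling of itself and hence cannot simultaneously be the image of a genuine compression. Once $\bar f$ is forced to be a support-preserving bijection with all classwise recolourings injective, each $f{\upharpoonright}\mathcal O$ is injective and $\bar f$ is injective, so $f$ is injective, as required.
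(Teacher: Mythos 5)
There is a genuine gap. Your setup is sound and in fact overlaps heavily with the paper's: your ``colour of a free atom $a$ in $\vec s$'' is exactly the paper's equivalence relation $\sim_{\vec s}$ on $A\setminus E$, your observation that $f$ commutes with permutations fixing $E$ and therefore sends colour classes wholesale to new colours is the paper's transposition argument (its Claim~4.4, that ${\sim_{u}}\subseteq{\sim_{f(u)}}$), and ``finitely many orbits per free-support level'' is its estimate \eqref{s026}. The problem is the step you yourself flag as ``the main obstacle'': deducing from surjectivity that no merging or deletion occurs. What you offer there is an announced \emph{aim} together with two devices (polynomial counting on the slices $\mathcal S_P$, and propagation of any compression up to the all-colours-multiplicity-one orbit) that are not carried out and do not obviously close. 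The difficulty is structural: the induced map $\bar f$ on orbits is surjective and nonincreasing in free-support size, but since the orbit set is infinite this does \emph{not} force $\bar f$ to preserve levels --- an orbit at level $j$ could a priori be covered only by orbits at strictly higher levels, so $\bar f$ restricted to the (finite) set of orbits of level $\leqslant j$ need not be surjective onto it, and likewise $f$ need not map the finite set $\mathcal S_P$ \emph{onto} $\mathcal S_P$ even though it maps it into $\mathcal S_P$. Your counting scheme would have to rule this out, and the ``propagation upward'' claim that is supposed to do so is exactly the unproved content; note also that a merge (as opposed to a deletion) does not even change the free-support size, so level bookkeeping alone cannot detect it.

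The paper closes the argument by a different and cheaper route that sidesteps classifying $\bar f$ entirely: it shows every $t$ is \emph{periodic}, i.e.\ $f^{(m)}(t)=t$ for some $m>0$, which immediately yields injectivity. The two ingredients are (i) the preorder $t\sqsubseteq u\iff{\sim_u}\subseteq{\sim_t}$ admits a uniform finite bound $l$ on repetition-free chains (because the number of $\sim$-classes is between $1$ and $2^n$ and only finitely many $t$ share a given $\sim_t$), and (ii) $f(u)\sqsubseteq u$. Surjectivity is then used only to build a backward chain $t=f^{(i)}(h(i))$ of length $l$, which by (i) must repeat, giving $f^{(m)}(t)=t$. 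If you want to salvage your approach, the cleanest fix is to import exactly this periodicity argument in place of the attempted classification of $\bar f$; as written, the proposal does not constitute a proof.
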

\begin{proof}
Let $n\in\omega$. Take an arbitrary surjection $f\in\mathcal{V}_\mathrm{F}$ from $\fin(A)^n$ onto $\fin(A)^n$.
In order to prove the injectivity of $f$, it suffices to show that
\begin{equation}\label{s024}
\text{for all $t\in\fin(A)^n$ there is an $m>0$ such that $f^{(m)}(t)=t$.}
\end{equation}
Let $B$ be a finite support of $f$. For each $t\in\fin(A)^n$,
let $\sim_t$ be the equivalence relation on $A\setminus B$ such that for all $a,b\in A\setminus B$,
\[
a\sim_tb\quad\text{if and only if}\quad\forall k<n\,\bigl(a\in t(k)\leftrightarrow b\in t(k)\bigr).
\]
Let $\sqsubseteq$ be the preorder on $\fin(A)^n$, such that for all $t,u\in\fin(A)^n$,
\[
t\sqsubseteq u\quad\text{if and only if}\quad{\sim_u}\subseteq{\sim_t}.
\]

\begin{claim}\label{s025}
There is an $l\in\omega$ such that
every $\sqsubseteq$-chain without repetition must have length less than $l$.
\end{claim}
\begin{proof}[Proof of Claim~\textnormal{\ref{s025}}]
We first prove that for all $u\in\fin(A)^n$,
\begin{equation}\label{s026}
\bigl|\bigl\{t\in\fin(A)^n\bigm|{\sim_t}={\sim_u}\bigr\}\bigr|\leqslant2^{(|B|+2^n)\cdot n}.
\end{equation}
Let $u\in\fin(A)^n$. Let $g$ be the function defined on $\fin(A)^n$ such that for all $t\in\fin(A)^n$,
$g(t)$ is the function on $n$ given by
\[
g(t)(k)=\bigl(t(k)\cap B,\,\bigl\{w\in (A\setminus B)/{\sim_u}\bigm|w\subseteq t(k)\bigr\}\bigr).
\]
Clearly, $\ran(g)\subseteq\bigl(\wp(B)\times\wp((A\setminus B)/{\sim_u})\bigr)^n$.
It is also easy to see that $g{\upharpoonright}\{t\in\fin(A)^n\mid{\sim_t}={\sim_u}\}$ is injective.
Since $|(A\setminus B)/{\sim_u}|\leqslant2^n$, we have
\[
\bigl|\bigl\{t\in\fin(A)^n\bigm|{\sim_t}={\sim_u}\bigr\}\bigr|\leqslant
\bigl|\bigl(\wp(B)\times\wp((A\setminus B)/{\sim_u})\bigr)^n\bigr|\leqslant2^{(|B|+2^n)\cdot n}.
\]

For each $t\in\fin(A)^n$, let $k_t=|(A\setminus B)/{\sim_t}|$.
Clearly, for all $t,u\in\fin(A)^n$ such that $t\sqsubseteq u$, we have $0<k_t\leqslant k_u\leqslant2^n$,
and if $k_t=k_u$ then ${\sim_t}={\sim_u}$. Thus, by \eqref{s026},
every $\sqsubseteq$-chain without repetition must have length less than or equal to $2^{(|B|+2^n)\cdot n}\cdot2^n$.
Now, it suffices to take $l=2^{(|B|+2^n+1)\cdot n}+1$.
\end{proof}

\begin{claim}\label{s027}
For all $u\in\fin(A)^n$ we have $f(u)\sqsubseteq u$.
\end{claim}
\begin{proof}[Proof of Claim~\textnormal{\ref{s027}}]
Assume towards a contradiction that ${\sim_u}\nsubseteq{\sim_{f(u)}}$ for some $u\in\fin(A)^n$.
Let $a,b\in A\setminus B$ be such that $a\sim_ub$ but not $a\sim_{f(u)}b$. Clearly $a\neq b$.
Let $\tau$ be the transposition that swaps $a$ and $b$. Then $\tau(u)=u$ but $\tau(f(u))\neq f(u)$,
contradicting that $B$ is a finite support of $f$.
\end{proof}

We prove \eqref{s024} as follows. Let $t\in\fin(A)^n$. By Claim~\ref{s025},
there is an $l\in\omega$ such that every $\sqsubseteq$-chain without repetition must have length less than $l$.
Let $h$ be a function from $l$ into $\fin(A)^n$, such that $h(0)=t$ and for all $i<l$ if $i+1<l$ then $h(i)=f(h(i+1))$.
Such an $h$ exists since $f$ is surjective. Clearly, for all $i<l$, $f^{(i)}(h(i))=t$.
By Claim~\ref{s027}, $h$ is a $\sqsubseteq$-chain, and since the length of $h$ is $l$,
we can find $i,j<l$ such that $i<j$ and $h(i)=h(j)$. Now, if we take $m=j-i$, then we have $m>0$ and
\[
f^{(m)}(t)=f^{(j-i)}(t)=f^{(j-i)}(f^{(i)}(h(i)))=f^{(j)}(h(j))=t.\qedhere
\]
\end{proof}

Now the following proposition immediately follows from Lemma~\ref{s023} and the Jech--Sochor theorem.

\begin{proposition}\label{s028}
The following statement is consistent with $\mathsf{ZF}$:
there is an infinite set $A$ such that $\fin(A)^n$ is dually Dedekind finite for any $n\in\omega$.
\end{proposition}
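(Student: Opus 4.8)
The plan is to transfer the conclusion of Lemma~\ref{s023} from the permutation model $\mathcal{V}_\mathrm{F}$, which satisfies only $\mathsf{ZFA}$, to a genuine model of $\mathsf{ZF}$ by means of the Jech--Sochor theorem (\cite[Theorem~17.2]{Halbeisen2017}), exactly as Proposition~\ref{s022} was obtained from Lemma~\ref{s020}. Recall that this theorem, applied to $\mathcal{V}_\mathrm{F}$ and an ordinal $\alpha$, produces a symmetric model $\mathcal{V}$ of $\mathsf{ZF}$ together with an $\in$-isomorphism $\pi$ between $\wp^\alpha(A)$ as computed in $\mathcal{V}_\mathrm{F}$ and $\wp^\alpha(\tilde{A})$ as computed in $\mathcal{V}$, where $A$ is the set of atoms of $\mathcal{V}_\mathrm{F}$ and $\tilde{A}=\pi(A)$. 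Consequently, any statement whose truth is determined by the structure $(\wp^\alpha(A),\in)$ is preserved under this transfer.

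First I would fix an appropriate $\alpha$ and check that the relevant statement is boundable. For each fixed $n\in\omega$, the assertion that $\fin(A)^n$ is dually Dedekind finite refers only to $\fin(A)^n$ and to the surjections of $\fin(A)^n$ onto itself; since $\fin(A)\subseteq\wp(A)$, an element of $\fin(A)^n$ is a function on $n$ taking values in $\fin(A)$, and such a surjection is a set of pairs of such functions. All of these objects have finite rank over $A$, and although this rank grows with $n$ it always stays below $\omega$. Hence every object mentioned in the full statement ``$\fin(A)^n$ is dually Dedekind finite for every $n\in\omega$'' lies in $\wp^{\omega}(A)$, so taking $\alpha=\omega$ (or any larger ordinal) renders the statement boundable, i.e.\ localizable to $(\wp^\alpha(A),\in)$.

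Next I would invoke Lemma~\ref{s023}, which says precisely that this boundable statement holds in $\mathcal{V}_\mathrm{F}$, and apply the Jech--Sochor theorem with the chosen $\alpha$; the isomorphism $\pi$ then carries its truth to $\mathcal{V}$, so that in $\mathcal{V}$ the set $\tilde{A}$ satisfies that $\fin(\tilde{A})^n$ is dually Dedekind finite for every $n\in\omega$. I would also record that $\tilde{A}$ is infinite in $\mathcal{V}$: for each $n$, the nonexistence of a bijection between $A$ and $n$ is again a boundable statement true in $\mathcal{V}_\mathrm{F}$, hence preserved, so $\tilde{A}$ is not finite. This yields inside the $\mathsf{ZF}$-model $\mathcal{V}$ an infinite set with the required property, establishing the consistency claim. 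I do not expect a genuine obstacle here; the only point demanding care is the boundability check, namely confirming that the universal quantifier over $n\in\omega$ does not drive the relevant sets past a single fixed ordinal---which is exactly the observation that $\mathrm{rank}(\fin(A)^n)$, though increasing in $n$, remains below $\omega$, so that one $\alpha$ serves all $n$ at once.
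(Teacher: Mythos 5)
Your proposal is correct and is exactly the paper's argument: the paper derives Proposition~\ref{s028} in one line by applying the Jech--Sochor theorem to Lemma~\ref{s023}, and your additional discussion of boundability (that all the relevant objects have rank bounded uniformly over $n$, so a single $\alpha$ suffices) just makes explicit the routine verification the paper leaves implicit.
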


\begin{corollary}\label{s029}
The following statement is consistent with $\mathsf{ZF}$:
there exists an infinite cardinal $\mathfrak{a}$ such that
$\fin(\mathfrak{a})^n\nleqslant^\ast[\fin(\mathfrak{a})]^n$ for any $n\geqslant2$.
\end{corollary}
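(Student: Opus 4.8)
The plan is to derive Corollary \ref{s029} directly from Proposition \ref{s028}. I work inside a model of $\mathsf{ZF}$ in which there is an infinite set $A$ with $\fin(A)^n$ dually Dedekind finite for every $n\in\omega$, I set $\mathfrak{a}=|A|$, and I show that the dual Dedekind finiteness of $\fin(A)^n$ already rules out $\fin(\mathfrak{a})^n\leqslant^\ast[\fin(\mathfrak{a})]^n$ once $n\geqslant2$. Thus the entire task reduces to one implication, valid in $\mathsf{ZF}$ (indeed in $\mathsf{ZFA}$): if $\fin(A)^n$ is dually Dedekind finite and $n\geqslant2$, then $\fin(\mathfrak{a})^n\nleqslant^\ast[\fin(\mathfrak{a})]^n$.

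I prove this implication by contradiction. Assume $\fin(\mathfrak{a})^n\leqslant^\ast[\fin(\mathfrak{a})]^n$, i.e.\ fix a surjection $\phi$ from a subset $S\subseteq[\fin(A)]^n$ onto $\fin(A)^n$. Let $D\subseteq\fin(A)^n$ be the set of $n$-tuples with pairwise distinct entries and let $\rho$ be the range map $\rho(\vec{y})=\ran(\vec{y})=\{y_1,\dots,y_n\}$ from $D$ onto $[\fin(A)]^n$. Then $\rho{\upharpoonright}D'$ is a surjection of $D'=\rho^{-1}[S]$ onto $S$, so $\phi\circ(\rho{\upharpoonright}D')$ is a surjection from the subset $D'$ of $\fin(A)^n$ onto $\fin(A)^n$.

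The first key step is to convert this partial surjection into a total self-map, so that Lemma \ref{s023} can be applied. I extend it to $F\colon\fin(A)^n\to\fin(A)^n$ by sending every point of $\fin(A)^n\setminus D'$ to the distinguished element $\vec{\varnothing}=\langle\varnothing,\dots,\varnothing\rangle$; since $\phi\circ(\rho{\upharpoonright}D')$ is already onto $\fin(A)^n$, the map $F$ is a surjection of $\fin(A)^n$ onto itself. By Lemma \ref{s023} it is injective, hence a bijection. Because $F$ is constant on $\fin(A)^n\setminus D'$, injectivity forces $|\fin(A)^n\setminus D'|\leqslant1$, so $D'$ omits at most one tuple of $D$.

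The final step is a counting contradiction, and this is where the hypothesis $n\geqslant2$ enters. For $n\geqslant2$ each $n$-element subset $\{s_1,\dots,s_n\}$ of $\fin(A)$ is the range of $n!\geqslant2$ distinct tuples in $D$, and since $\fin(A)$ is infinite there are infinitely many such subsets; as $D'$ omits at most one tuple, some $n$-set still has two distinct orderings $\vec{y}\neq\vec{y}'$ both lying in $D'$. Then $F(\vec{y})=\phi(\ran(\vec{y}))=\phi(\ran(\vec{y}'))=F(\vec{y}')$, contradicting the injectivity of $F$. I expect the main obstacle to be exactly this bridge between the ``surjection from a subset'' content of $\leqslant^\ast$ and the ``total self-surjection'' hypothesis of dual Dedekind finiteness: the trick of filling in the missing part of the domain with the single canonical point $\vec{\varnothing}$, combined with the observation that the range map $\rho$ has fibers of size $n!$, is what forces the two notions to interact and yields the contradiction.
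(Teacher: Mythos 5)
Your proof is correct and takes essentially the same route as the paper, whose own argument is just the terse observation that for $n\geqslant2$ there is a non-injective surjection from $\fin(A)^n$ onto $[\fin(A)]^n$, so that the claim follows from Proposition~\ref{s028}; you have filled in exactly the intended details (compose the range map with a hypothetical witness to $\fin(\mathfrak{a})^n\leqslant^\ast[\fin(\mathfrak{a})]^n$, totalize by a constant, and contradict dual Dedekind finiteness). One small remark: since for $n\geqslant2$ the set $\fin(A)^n\setminus D$ of tuples with a repeated entry is already infinite and is contained in $\fin(A)^n\setminus D'$, your bound $|\fin(A)^n\setminus D'|\leqslant1$ is itself an immediate contradiction, so the final counting step with the $n!$ orderings is not actually needed.
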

\begin{proof}
Notice that for all infinite sets $A$ and all natural numbers $n\geqslant2$,
there exists a non-injective surjection from $\fin(A)^n$ onto $[\fin(A)]^n$.
Hence, this corollary follows from Proposition~\ref{s028}.
\end{proof}

We conclude this paper with two open problems.

\begin{question}
Is it provable in $\mathsf{ZF}$ that $2^{2^{\fin(\mathfrak{a})}}=2^{2^{\fin(\fin(\mathfrak{a}))}}$ for any infinite cardinal $\mathfrak{a}$?
\end{question}

Notice that Proposition~\ref{s022} shows that $2^{\fin(\mathfrak{a})}=2^{\fin(\fin(\mathfrak{a}))}$
cannot be proved in $\mathsf{ZF}$ for an arbitrary infinite cardinal $\mathfrak{a}$.

\begin{question}\label{s030}
Does $\mathsf{ZF}$ prove that $2^{2^{\mathfrak{a}}}=2^{2^{\mathfrak{a}+1}}$ for any infinite cardinal $\mathfrak{a}$?
\end{question}

Notice that for all Dedekind finite cardinals $\mathfrak{a}$ we have $\mathfrak{a}<\mathfrak{a}+1$,
and for all power Dedekind finite cardinals $\mathfrak{a}$ (i.e., cardinals $\mathfrak{a}$ such that $2^\mathfrak{a}$ is Dedekind finite)
we have $2^\mathfrak{a}<2^{\mathfrak{a}+1}$.

Question~\ref{s030} is asked in \cite{Lauchli1961} (cf.~also \cite[p.~132]{Halbeisen2017}).
Notice that, in \cite{Lauchli1961}, L\"auchli proves in $\mathsf{ZF}$ that for all infinite cardinals $\mathfrak{a}$,
\[
2^{2^{\mathfrak{a}}}=2^{2^{\mathfrak{a}}+1}.
\]

\subsection*{Acknowledgements}
I would like to give thanks to Professor Qi Feng
for his advice and encouragement during the preparation of this paper.


\normalsize


\begin{thebibliography}{5}




\baselineskip=17pt



\bibitem{Ellentuck1966} E. Ellentuck,
\emph{Generalized idempotence in cardinal arithmetic},
Fund. Math. 58 (1966), 241--258.

\bibitem{Halbeisen2017} L. Halbeisen,
\emph{Combinatorial Set Theory: With a Gentle Introduction to Forcing},
2nd ed., Springer Monogr. Math., Springer, Cham, 2017.

\bibitem{Lauchli1961} H. L\"auchli,
\emph{Ein Beitrag zur Kardinalzahlarithmetik ohne Auswahlaxiom},
Z. Math. Log. Grundl. Math. 7 (1961), 141--145.

\bibitem{ShenYuan2019} G. Shen and J. Yuan,
\emph{Factorials of infinite cardinals in $\mathsf{ZF}$, Part I: $\mathsf{ZF}$ results},
J. Symb. Log. (2019), to appear. \url{https://doi.org/10.1017/jsl.2019.74}

\bibitem{Truss1974} J. Truss,
\emph{Classes of Dedekind finite cardinals},
Fund. Math. 84 (1974), 187--208.

\end{thebibliography}
\end{document}